\newtheorem{theorem}{Theorem}[section]
\newtheorem{lemma}[theorem]{Lemma}
\newtheorem{corollary}[theorem]{Corollary}
\newtheorem{conjecture}[theorem]{Conjecture}
\newtheorem{proposition}[theorem]{Proposition}
\theoremstyle{definition} 
\newtheorem{definition}[theorem]{Definition}
\newtheorem{remark}[theorem]{Remark}
\newtheorem{example}[theorem]{Example}
\newcommand{\todo}[2][]{}
\newcommand{\texthl}[1]{}
\title{A Bijection Between Weighted Dyck Paths and 1234-avoiding Up-Down Permutations}
\author{Justine Falque}
\address{Univ. Gustave Eiffel, Laboratoire d'Informatique Gaspard Monge,
Marne-la-Vallée; 
France}
\abstract{Three-dimensional Catalan numbers are a variant of the classical
(bidimensional) Catalan numbers, that count, among other interesting objects,
the standard Young tableaux of shape $(n,n,n)$. In this paper, we present
a structural bijection between two three-dimensional Catalan objects: 
1234-avoiding up-down permutations, and a class of weighted Dyck paths.}
\keywords{Bijective combinatorics, three-dimensional Catalan numbers,
  up-down permutations, pattern avoidance, weighted Dyck paths, Young tableaux, prographs}
\begin{document}

\maketitle

\section{Introduction}

Among a vast amount of combinatorial classes of objects, the
famous Catalan numbers enumerate the standard Young tableaux of shape
$(n,n)$. Counting the standard tableaux of shape $(n,n,n)$ is a sequence
known as the three-dimensional Catalan numbers 
\href{https://oeis.org/A005789}{A005789}, whose first entries are
$1, 1, 5, 42, 462, 6006, 87516, 1385670, \ldots$ .
Many other combinatorial objects are enumerated by this sequence, from certain walks
in the quarter plane~\cite{bousquet_mishna_walks}, to product-coproduct prographs~\cite{Borie17}, to 1234-avoiding up-down permutations.

This last case, which this paper dwells on, was proven by Lewis, who
provides in~\cite{Lewis1,Lewis2} two bijections between this class
and standard Young tableaux 
of shape $(n,n,n)$. 
As observed by Borie in~\cite{Borie17} however, these bijections do not 
highlight any obvious similarities of combinatorial nature.

In this same article, Borie proves that product-coproduct prographs are 
three-dimen\-sional Catalan objects, by giving a bijection with standard
Young tableaux of shape $(n,n,n)$; on the other hand, and with another bijection
involving his prographs, he highlights a certain class of weighted Dyck paths as a
new three-dimensional Catalan family.
About this new family, he makes the freely rephrased following conjecture,
which is the starting point of this paper.

\begin{conjecture}[Conjecture of Borie~\cite{Borie17}]
\label{conj1234}
  There exists a combinatorial bijection between up-down permutations of 
  size $2n$ avoiding $1234$ and a certain class of weighted Dyck paths.
\end{conjecture}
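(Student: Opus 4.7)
Since Borie's conjecture is about a \emph{structural} bijection rather than a composition of Lewis's and Borie's existing bijections through $\mathrm{SYT}(n,n,n)$, my approach is to uncover a common combinatorial skeleton shared by both families. I would decompose each 1234-avoiding up-down permutation of $[2n]$ into a coarse piece (which records the Dyck path shape) and a fine piece (which records the weights), then show that the fine piece is free within precisely the range allowed by the target weighted Dyck path class.

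The first step is a structural analysis of the source family. A 1234-avoiding permutation has longest increasing subsequence at most $3$, so by Greene--Erd\H{o}s--Szekeres it partitions canonically into three decreasing chains. In an up-down permutation the odd-position entries (valleys) $v_1,\dots,v_n$ and even-position entries (peaks) $p_1,\dots,p_n$ satisfy $v_i<p_i>v_{i+1}$, and these strong local constraints, combined with pattern avoidance, force the chains to distribute between peaks and valleys in a controlled way. I would make this controlled way explicit: for each $i$, I would determine exactly which data (a bounded nonnegative integer, to become the weight of the $i$-th step) needs to be recorded in order to reconstruct the pair $(v_i,p_i)$ from what has been seen so far and from the "type" of the $i$-th pair with respect to the chain decomposition.

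The second step is to read off the Dyck path. I would scan the permutation left-to-right and emit, at step $i$, an up-step or a down-step according to a simple comparison between consecutive peaks (or between the chain-types of consecutive ascents). That this yields a Dyck path should follow from an invariant tracking, at each prefix, how many of the three chains are currently "open"; the up-down pattern combined with 1234-avoidance should guarantee that this invariant stays nonnegative and ends at zero. The weight attached to the $i$-th step then records the reconstructive choice isolated in step one.

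The third and final step is to construct an explicit inverse. Given a weighted Dyck path, I would read it left to right, maintaining a small state (essentially the currently open chains and their tops), and use each step's type and weight to append the next pair $(v_i,p_i)$. I would verify that the output is always up-down and 1234-avoiding, and that the allowed weight ranges at each step match exactly Borie's class. The main obstacle I anticipate is step one: identifying the right structural invariant and proving that the chain decomposition really does capture the avoidance and alternation constraints simultaneously, so that the weights at distinct steps vary independently within their announced bounds. Once this independence is established, the map and its inverse become routine to write down and the bijection follows.
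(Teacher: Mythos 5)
There is a genuine gap: what you have written is a research programme, not a proof. Every point where the actual mathematical content of the bijection would have to appear is deferred with ``I would determine exactly which data needs to be recorded,'' ``should follow from an invariant,'' ``I would verify that the allowed weight ranges match exactly Borie's class,'' and you yourself flag the first step as the main unresolved obstacle. But that first step \emph{is} the theorem. The target class $WD_{2n}$ is not ``a certain class'' to be discovered a posteriori: it is fixed in advance by five interlocking conditions (weights bounded by the lower height, monotone along each slope, the peak condition $e+d\leqslant h$ and the valley condition $d+e\geqslant h$), and the whole difficulty is to produce weight data from the permutation that provably ranges \emph{independently and exactly} over these constraints. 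Nothing in the Greene/Erd\H{o}s--Szekeres decomposition into three decreasing chains hands you this for free; the chain decomposition is not even canonical without further choices, and the interaction between the alternation constraint and $1234$-avoidance is precisely where the coupling between consecutive weights (the peak and valley inequalities) comes from. Until that coupling is identified and matched to Definition~\ref{def.wd}, no bijection has been exhibited.

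Two further concrete concerns. First, your plan reads the permutation positionally left to right and emits the $i$-th step of the path from a local comparison at position $i$; but Borie's conjecture (and the construction in this paper) requires the up-steps of the path to sit at the positions given by the \emph{values} of the bottom elements of the permutation, a non-local correspondence that a positional scan does not obviously produce, and you never address compatibility with the Schützenberger involution or the concatenation product. Second, the paper's route is the reverse of yours and is genuinely algorithmic: it builds the permutation from the path by inserting the positions of up-steps (resp.\ down-steps) into a bottom (resp.\ top) word, with an insertion position computed from the weight plus a $shift$, and a ``jump'' rule triggered when a weight is extremal for the slope/peak/valley conditions; injectivity comes from a first-difference argument (Lemma~\ref{lemma.different_positions}), membership in $A_{2n}(1234)$ from Borie's four-criteria characterization (Proposition~\ref{prop_cond}), and bijectivity from equicardinality. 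If you want to pursue your direction, the honest next step is to write down, for a single pair $(v_i,p_i)$, the exact integer you record as a weight and prove the peak and valley inequalities for it; at present that is entirely missing.
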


Borie additionally assumes that the positions of the steps $(1,1)$ in the
paths should correspond to the bottom elements in the permutation,
and came up with a partial bijection, in the particular case where these are
exactly the elements $1,2,\ldots,n$. He relied on the observation that
the product-coproduct prographs, in this case, were essentially pairs of binary trees,
and he used a bijection to $123$-avoiding permutations on each one
in such a way that the two permutations could respectively become the 
bottom elements and
top elements of a $1234$-avoiding up-down permutation.

In this extended abstract, we present a general bijection 
from these weighted Dyck paths to $1234$-avoiding up-down permutations.
This bijection extends Borie's partial bijection to the whole combinatorial
classes and preserves some structural properties and statistics.

\section{The combinatorial objects}
This section presents, in two separate subsections, both
classes of combinatorial objects dealt with in this paper.
In each case, we recall the definition, provide examples, and
then describe the Schützenberger involution and a natural product
on the objects.

\subsection{Up-down permutations of $2n$ avoiding $1234$}~\label{section.up_down}

An up-down permutation of $2n$ avoiding $1234$ is a
permutation of size $2n$ whose descents set is $\{2, 4, 6, \dots \}$,
with no increasing subsequence of length $4$. We denote by $A_{2n}(1234)$
the set of all these permutations.

For instance, here are the $42$ up-down permutations of size $6$ avoiding $1234$:
\begin{equation}
  A_{6}(1234) = \left\{
  \begin{array}{c}
    143625, 153624, 154623, 163524, 164523, 241635, 243615,  \\
    251436, 251634, 253614, 254613, 261435, 261534, 263514,  \\
    264513, 341625, 342615, 351426, 351624, 352416, 352614,  \\
    354612, 361425, 361524, 362415, 362514, 364512, 451326,  \\
    451623, 452316, 452613, 453612, 461325, 461523, 462315,  \\
    462513, 463512, 561324, 561423, 562314, 562413, 563412
  \end{array}
  \right\}.
\end{equation}

We shall use the following convenient notations: 
for any permutation $\sigma$ of $A_{2n}(1234)$ seen as a word, we denote by
$Bot(\sigma) = \sigma_2 \sigma_4 \cdots \sigma_{2n}$ the subword that consists of 
the letters in even positions --- that we may call bottom elements rather than
valleys, in order to avoid confusion with the valleys of Dyck paths; 
and by $Top(\sigma) = \sigma_1 \sigma_3 \cdots \sigma_{2n-1}$ the subword of the odd-position letters --- that we may call top elements.
Of course, $\sigma$ is determined by $Bot(\sigma)$ and $Top(\sigma)$, for
instance:

\hspace{2.5cm}
\begin{tikzpicture}[scale=.6]
\draw (-5,0.5) node{$\sigma = 364512$};
\draw (0,0) node{3};
\draw (1.5,0) node{4};
\draw (3,0) node{1};
\draw (0.75,1) node{6};
\draw (2.25,1) node{5};
\draw (3.75,1) node{2};

\draw[<-,>=stealth] (6,0) -- (7,0);
\draw[<-,>=stealth] (6,1) -- (7,1);
\draw (9,0) node{$Bot(\sigma)~.$};
\draw (9,1) node{$Top(\sigma)$};
\end{tikzpicture}


The set of $1234$-avoiding up-down permutations is endowed with a product:
\begin{lemma}[\cite{Borie17}]
  The set $\bigcup_{n \in \mathbb{N}} A_{2n}(1234)$ is closed under the 
  shifted concatenation product $\bullet$ on permutations
  defined by $\sigma \bullet \tau = (\operatorname{shift}_{length(\tau)}(\sigma)) 
  \cdot \tau$.
\end{lemma}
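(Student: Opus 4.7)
The plan is to take $\sigma \in A_{2m}(1234)$ and $\tau \in A_{2n}(1234)$, set $\pi := \sigma \bullet \tau = \operatorname{shift}_{2n}(\sigma)\cdot \tau$, and verify the two defining properties: that $\pi$ is up-down of size $2(m+n)$, and that it avoids the pattern $1234$. The key structural observation to exploit throughout is that every letter of the left block $\operatorname{shift}_{2n}(\sigma)$ is strictly greater than every letter of the right block $\tau$, since the shift by $2n$ sends the values of $\sigma$ into $\{2n+1,\dots,2n+2m\}$ while $\tau$ uses exactly $\{1,\dots,2n\}$.

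For the descent-set condition, I would first note that shifting preserves descents, so the descents of $\operatorname{shift}_{2n}(\sigma)$ inside the first $2m$ positions are exactly the even positions $\{2,4,\dots,2m-2\}$, and the descents of $\tau$ contribute exactly $\{2m+2,2m+4,\dots,2m+2n-2\}$ after re-indexing. The only position to check by hand is the junction $2m$, where the last letter of $\operatorname{shift}_{2n}(\sigma)$ (which is $>2n$) is compared to the first letter of $\tau$ (which is $\le 2n$); this is necessarily a descent. Assembling these contributions gives the required descent set $\{2,4,\dots,2(m+n)-1\}\setminus\{\text{odd}\}$, i.e.\ all even positions up to $2(m+n)-2$.

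For pattern avoidance, I would argue by contradiction: suppose $\pi$ contained an increasing subsequence $a<b<c<d$. Because every value in the left block dominates every value in the right block, no element of the left block can precede a strictly larger element of the right block in $\pi$. Hence the four indices must lie entirely in one of the two blocks, giving an increasing subsequence of length four either in $\operatorname{shift}_{2n}(\sigma)$ (equivalently in $\sigma$, since shifting preserves relative order) or in $\tau$, contradicting the hypothesis that both avoid $1234$.

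There is no real obstacle here; the statement is a direct stability check, and the only mildly delicate point is the junction descent, which is automatic from the value inequality between the two blocks. The argument immediately implies associativity of $\bullet$ on $\bigcup_n A_{2n}(1234)$, so the union inherits the structure of a graded semigroup, which will be convenient when comparing to the analogous product on weighted Dyck paths later.
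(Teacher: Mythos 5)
Your proof is correct. The paper states this lemma without proof (it is imported from the cited reference), and your argument --- checking the junction descent at position $2m$ via the value dominance of the shifted left block, and observing that an increasing subsequence can never pass from the dominating left block to the smaller right block, so any $1234$ occurrence would live entirely in one factor --- is exactly the standard verification one would expect; nothing is missing.
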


For instance, we have ${\color{red}12} \bullet 1423 = {\color{red}56}1423$.
\\


Another structure indicator, the classical Schützenberger involution on 
permutations consists in reversing
the alphabet, then reversing the reading direction. 

For example,
we have $S(48271635) = 46382715$. 
As it preserves appearance and avoidance of patterns, $S$ stabilizes the 
set of up-down permutations of $2n$ avoiding $1234$.

We shall use a variant on words:
for any word $\omega$ on the alphabet $\{1, 2, \ldots, 2n\}$,  
$S_{2n}(\omega)$ is obtained by the same process of reversing the alphabet 
and the reading direction. The main difference
is that not all letters need to appear.
For instance, we have $S_{8}(164) = 538$.

\subsection{Weighted Dyck paths}

\begin{definition}~\label{def.wd}
  We denote by $WD_{2n}$ weighted Dyck
  paths of length $2n$ whose weights satisfy the following assertions:
  \begin{compactenum}
    \item all weights are non-negative integers smaller than
      or equal to the lower height;
    \item weights are non-decreasing on successive rises;
    \item weights are non-increasing on successive descents;
    \item On a peak of height $h$, with $d$ and $e$ the weights of its steps, we
      have: $e + d \leqslant h$\,;
    \item On a valley of height $h$, with $d$ and $e$ the weights of its steps, we
      have: $d + e \geqslant h$.
  \end{compactenum}
\end{definition}

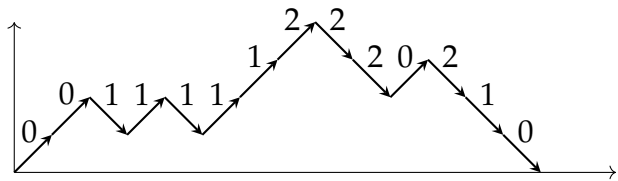
\begin{figure}[h!]
  \centering
  \begin{tikzpicture}[scale=0.5]
    \tikzstyle{p}=[->, thick, >=stealth]
    
    \draw[->] (0,0) -- (16, 0);
    \draw[->] (0,0) -- (0, 4);
    
    \draw[p] (0,0) -- (1,1);  
    \draw[p] (1,1) -- (2,2);  
    \draw[p] (2,2) -- (3,1);  
    \draw[p] (3,1) -- (4,2);  
    \draw[p] (4,2) -- (5,1);  
    \draw[p] (5,1) -- (6,2);  
    \draw[p] (6,2) -- (7,3);  
    \draw[p] (7,3) -- (8,4);  
    \draw[p] (8,4) -- (9,3);  
    \draw[p] (9,3) -- (10,2); 
    \draw[p] (10,2) -- (11,3); 
    \draw[p] (11,3) -- (12,2); 
    \draw[p] (12,2) -- (13,1); 
    \draw[p] (13,1) -- (14,0); 

    \draw (0.4, 0.5) node[above] {$0$};
    \draw (1.4, 1.5) node[above] {$0$};
    \draw (2.6, 1.5) node[above] {$1$};
    \draw (3.4, 1.5) node[above] {$1$};    
    \draw (4.6, 1.5) node[above] {$1$};
    \draw (5.4, 1.5) node[above] {$1$};
    \draw (6.4, 2.5) node[above] {$1$};
    \draw (7.4, 3.5) node[above] {$2$};
    \draw (8.6, 3.5) node[above] {$2$};
    \draw (9.6, 2.5) node[above] {$2$};
    \draw (10.4, 2.5) node[above] {$0$};
    \draw (11.6, 2.5) node[above] {$2$};
    \draw (12.6, 1.5) node[above] {$1$};
    \draw (13.6, 0.5) node[above] {$0$};
  \end{tikzpicture}
  \caption{Example of a weighted Dyck path in $DW_{14}$.}~\label{pathsweights}
  \label{figure.example_wd}
\end{figure}

There is a natural concatenation product on these objects, as well as
a natural notion of ``Schützenberger involution''
: the reflection according to a vertical axis (see 
Figure~\ref{fig.concat_schutz}).

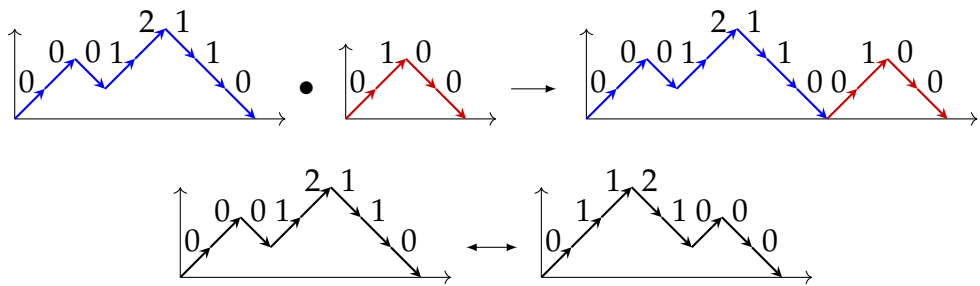
\begin{figure}
  \centering
  \begin{tikzpicture}[scale=0.4]
    \tikzstyle{p}=[->, thick, >=stealth]
    
    \draw[->] (0,0) -- (9, 0);
    \draw[->] (0,0) -- (0, 3);
    
    \draw[p, blue] (0,0) -- (1,1);  
    \draw[p, blue] (1,1) -- (2,2);  
    \draw[p, blue] (2,2) -- (3,1);  
    \draw[p, blue] (3,1) -- (4,2);  
    \draw[p, blue] (4,2) -- (5,3);  
    \draw[p, blue] (5,3) -- (6,2);  
    \draw[p, blue] (6,2) -- (7,1);  
    \draw[p, blue] (7,1) -- (8,0);  
    \draw (0.4, 0.5) node[above] {$0$};
    \draw (1.4, 1.5) node[above] {$0$};
    \draw (2.6, 1.5) node[above] {$0$};
    \draw (3.4, 1.5) node[above] {$1$};
    \draw (4.4, 2.5) node[above] {$2$};
    \draw (5.6, 2.5) node[above] {$1$};    
    \draw (6.6, 1.5) node[above] {$1$};
    \draw (7.6, 0.5) node[above] {$0$};
    
    \draw (9.7,1) node{$\bullet$};
    
    \draw[->] (11,0) -- (16, 0);
    \draw[->] (11,0) -- (11, 2.5);
    
    \draw[p, red!80!black] (11,0) -- (12,1);  
    \draw[p, red!80!black] (12,1) -- (13,2);  
    \draw[p, red!80!black] (13,2) -- (14,1);  
    \draw[p, red!80!black] (14,1) -- (15,0);  
    \draw (11.4, 0.5) node[above] {$0$};
    \draw (12.4, 1.5) node[above] {$1$};
    \draw (13.6, 1.5) node[above] {$0$};
    \draw (14.6, 0.5) node[above] {$0$};
        
    \draw[->, >=latex] (16.5,1) -- (18, 1);
    
    \draw[->] (19,0) -- (32, 0);
    \draw[->] (19,0) -- (19, 3);
    
    \draw[p, blue] (19,0) -- (20,1);  
    \draw[p, blue] (20,1) -- (21,2);  
    \draw[p, blue] (21,2) -- (22,1);  
    \draw[p, blue] (22,1) -- (23,2);  
    \draw[p, blue] (23,2) -- (24,3);  
    \draw[p, blue] (24,3) -- (25,2);  
    \draw[p, blue] (25,2) -- (26,1);  
    \draw[p, blue] (26,1) -- (27,0);  
    \draw (19.4, 0.5) node[above] {$0$};
    \draw (20.4, 1.5) node[above] {$0$};
    \draw (21.6, 1.5) node[above] {$0$};
    \draw (22.4, 1.5) node[above] {$1$};
    \draw (23.4, 2.5) node[above] {$2$};
    \draw (24.6, 2.5) node[above] {$1$};    
    \draw (25.6, 1.5) node[above] {$1$};
    \draw (26.6, 0.5) node[above] {$0$};
    \draw[p, red!80!black] (27,0) -- (28,1);  
    \draw[p, red!80!black] (28,1) -- (29,2);  
    \draw[p, red!80!black] (29,2) -- (30,1);  
    \draw[p, red!80!black] (30,1) -- (31,0);  
    \draw (27.4, 0.5) node[above] {$0$};
    \draw (28.4, 1.5) node[above] {$1$};
    \draw (29.6, 1.5) node[above] {$0$};
    \draw (30.6, 0.5) node[above] {$0$};
  \end{tikzpicture}
  
  \vspace{.4cm}
  
  \begin{tikzpicture}[scale=0.4]
    \tikzstyle{p}=[->, thick, >=stealth]
    
    \draw[->] (0,0) -- (9, 0);
    \draw[->] (0,0) -- (0, 3);
    
    \draw[p] (0,0) -- (1,1);  
    \draw[p] (1,1) -- (2,2);  
    \draw[p] (2,2) -- (3,1);  
    \draw[p] (3,1) -- (4,2);  
    \draw[p] (4,2) -- (5,3);  
    \draw[p] (5,3) -- (6,2);  
    \draw[p] (6,2) -- (7,1);  
    \draw[p] (7,1) -- (8,0);  
    \draw (0.4, 0.5) node[above] {$0$};
    \draw (1.4, 1.5) node[above] {$0$};
    \draw (2.6, 1.5) node[above] {$0$};
    \draw (3.4, 1.5) node[above] {$1$};
    \draw (4.4, 2.5) node[above] {$2$};
    \draw (5.6, 2.5) node[above] {$1$};    
    \draw (6.6, 1.5) node[above] {$1$};
    \draw (7.6, 0.5) node[above] {$0$};
    
    \draw[<->,>=latex] (9.5,1) -- (11.2,1);
    
    \draw[->] (12,0) -- (21, 0);
    \draw[->] (12,0) -- (12, 3);
    
    \tikzstyle{p}=[<-, thick, >=stealth]
    \draw[p] (20,0) -- (19,1);  
    \draw[p] (19,1) -- (18,2);  
    \draw[p] (18,2) -- (17,1);  
    \draw[p] (17,1) -- (16,2);  
    \draw[p] (16,2) -- (15,3);  
    \draw[p] (15,3) -- (14,2);  
    \draw[p] (14,2) -- (13,1);  
    \draw[p] (13,1) -- (12,0);  
    \draw (19.6, 0.5) node[above] {$0$};
    \draw (18.6, 1.5) node[above] {$0$};
    \draw (17.4, 1.5) node[above] {$0$};
    \draw (16.6, 1.5) node[above] {$1$}; 
    \draw (15.6, 2.5) node[above] {$2$};
    \draw (14.4, 2.5) node[above] {$1$};    
    \draw (13.4, 1.5) node[above] {$1$};
    \draw (12.4, 0.5) node[above] {$0$};
  \end{tikzpicture}
  \caption{Concatenation product and Schützenberger involution in $WD_{2n}$.}~\label{fig.concat_schutz}
\end{figure}


\section{A statistics-preserving bijection}

This section presents the results of our work, specifically a solution
to the following conjecture.

\begin{conjecture}[Conjecture of Borie~\cite{Borie17}]
\label{conj_nicolas}
  There exists a bijection between up-down permutations of size $2n$
  avoiding $1234$ and weighted Dyck paths of $WD_{2n}$ that has the
  following properties: compatibility
  with the concatenation product and the Schützenberger involution;
  correspondence between the positions of steps $(1,1)$ in the
  Dyck path and the bottom elements of the permutation.
\end{conjecture}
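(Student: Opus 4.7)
The plan is to define the bijection $\Phi: WD_{2n} \to A_{2n}(1234)$ explicitly, proceeding inductively on $n$ via the product structure and using a direct combinatorial encoding on prime objects. I would first establish a key structural lemma: for any $\sigma \in A_{2n}(1234)$, both $Bot(\sigma)$ and $Top(\sigma)$ are 123-avoiding words on their respective value sets. Indeed, a 123-pattern in $Top(\sigma)$ extends to a 1234 in $\sigma$ by prepending the preceding valley, and symmetrically for $Bot(\sigma)$ using the following peak. This lemma provides the Catalan scaffolding underlying both halves of the permutation.

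Next, I would prove a unique prime decomposition on both sides under $\bullet$: on the path side, a prime element is one that touches the $x$-axis only at its endpoints; on the permutation side, $\sigma$ is prime iff no proper suffix of length $2k$ uses precisely the values $\{1,\ldots,2k\}$. Requiring $\Phi$ to be compatible with $\bullet$ then reduces its definition to primes. For a prime $p \in WD_{2n}$, I would define $B \subset \{1,\ldots,2n\}$ as the set of up-step positions (which becomes the set of bottom values of $\Phi(p)$) and $T$ as its complement (the top values). The weights on up-steps, read in order, then encode, via a Catalan-type statistic, the 123-avoiding rearrangement of $B$ that forms $Bot(\Phi(p))$; symmetrically, the down-step weights, read right-to-left, encode $Top(\Phi(p))$.

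The main obstacle will be verifying that the five constraints of Definition~\ref{def.wd} are exactly equivalent to $\Phi(p)$ being a valid 1234-avoiding up-down permutation. The monotonicity conditions (items 2 and 3) should translate to $Bot$ and $Top$ being 123-avoiding on their own (since successive weights along a monotone subpath recover the nesting structure of a single Dyck path), while the peak and valley sum conditions should encode the subtler joint interaction: the peak condition $d+e\leq h$ at height $h$ should correspond to the interlocking inequality $\sigma_{2i-1}<\sigma_{2i}$, and the valley condition $d+e\geq h$ to $\sigma_{2i}>\sigma_{2i+1}$, precisely ruling out the 1234-patterns that combine elements from both $Bot$ and $Top$. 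Once this translation is in hand, the inverse $\Phi^{-1}$ is obtained by reading off $B$, $T$, and the relevant Catalan statistics from $\sigma$, from which injectivity and surjectivity follow automatically; matching the three-dimensional Catalan cardinality on both sides provides a sanity check.

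Finally, the compatibility of $\Phi$ with $\bullet$ is built into the recursive definition, and the compatibility with the Schützenberger involution $S$ reduces to the observation that $S$ exchanges the roles of $Bot$ and $Top$ via $v\mapsto 2n+1-v$, while the axial reflection on paths exchanges up-step weights with down-step weights; since the encoding is symmetric in these two halves, the two involutions intertwine through $\Phi$. The correspondence between positions of $(1,1)$-steps and bottom values holds by construction.
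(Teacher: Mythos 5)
Your overall architecture (reduce to prime factors under $\bullet$, let the up-step positions be the bottom values, encode the $123$-avoiding words $Bot$ and $Top$ separately through the up-step and down-step weights, and get Schützenberger compatibility from the symmetry of the construction) is the same as the paper's. But the proposal has a genuine gap at its core: you never specify the ``Catalan-type statistic'' by which the up-step weights are supposed to encode the $123$-avoiding rearrangement of $B$. That encoding is precisely the content of Borie's conjecture and of Definition~\ref{definition.bij}: an insertion algorithm in which each up-step position is either inserted at distance $wd(u)+shift\,(-1)$ from the right, or ``jumps'' to the far left when its weight is extremal, where extremality is tested against the increment condition on the slope or against the adjacent peak/valley condition depending on whether the slope lies in the left or right half of the path. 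The $shift$ offset, the jump rule, and the $L$/$R$ asymmetry are exactly what make the map injective and compatible with $S$ (see Remark~\ref{remark.locations_comp_for_jumps}), and none of this is recoverable from ``a Catalan-type statistic'' applied to each half independently, because the two halves interact.

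Relatedly, your proposed dictionary between the constraints of Definition~\ref{def.wd} and the permutation conditions is not correct. The right characterization (Borie's, quoted as Proposition~\ref{prop_cond}) has \emph{four} conditions: $123$-avoidance of $Top$ and of $Bot$, plus two genuinely non-local interaction conditions (every top value smaller than a bottom element $k$ lies to its right; larger peak values to the right of certain bottom elements appear in decreasing order). The peak and valley inequalities $d+e\le h$ and $d+e\ge h$ do not reduce to the local alternation $\sigma_{2i-1}<\sigma_{2i}>\sigma_{2i+1}$: in the paper's proof they are what rule out the mixed $1234$ patterns of conditions (3) and (4), via a computation of the signed distance $wd(u)+wd(d)-h_{val}$ between a bottom and a top element in the image. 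Conversely, proving that $Bot(\sigma)$ avoids $123$ is \emph{not} a consequence of the monotonicity conditions alone: it requires collapsing all upward slopes to a single slope, and showing the collapsed weights form a non-decreasing parking function uses the consecutive peak and valley conditions. So the clean separation you posit (monotonicity $\leftrightarrow$ $123$-avoidance of each half, peak/valley $\leftrightarrow$ alternation) would fail, and with it the claim that injectivity and surjectivity ``follow automatically.''
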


Let $wd$ be an irreducible weighted Dyck path (that is, a weighted
Dyck path with no intermediate return to 0) 
of length $2n$ and let $wd(u)$ denote the weight 
associated with the step in position $u \in \{1,\ldots,2n\}$.
Let us define a map $\beta'$ on 
irreducible weighted Dyck paths. The image permutation is computed using
an algorithm that inserts the \emph{positions} of the steps one at a time;
moreover, the elements of the bottom word are the positions of the steps $(1,1)$
in the Dyck path.

\begin{definition}
\label{definition.bij}
Let $\beta'(wd)$ be the permutation $\sigma$ whose bottom word is
obtained by an insertion algorithm $ins$ defined below;
and the top word is obtained by applying the
Schüztenberger involution to $wd$, then the algorithm $ins$, and then
the (shifted) Schützenberger involution again (so that in the end the 
elements in the top word are the positions of steps $(1,-1)$ in $wd$).
Formally, $\sigma$ is defined by:
\vspace{-.2cm}
\begin{align*}
Bot(\sigma) &= ins(wd)\\
Top(\sigma) &= S_{2n}(ins(S(wd)))
\end{align*}

\vspace{-.2cm}
where $S$ and $S_{2n}$ are the 
Schützenberger involution and its variant on words,
respectively, and $ins$ is the function defined by the following
algorithm.
\begin{enumerate}
\item Split $wd$ into the first, left-hand
half of slopes $L$ and the second, right-hand half of slopes $R$
(see Figures~\ref{figure.locations_comp}\,\&\,\ref{figure.slopes_up} for examples).
\item 
Start with $\tau = \varepsilon$ the empty permutation. The weighted
Dyck path $wd$ is scanned from the left, and elements are inserted
in $\tau$ as we go.
\item
For every upward slope $U$ in $wd$, starting from the left

\vspace{-.3cm}
\begin{enumerate}
  \item Let $shift$ be the total number of steps $(1,-1)$ that are to the left 
  of the slope in $wd$.
  \item For every step $(1,1)$ of the current upward slope, starting from the left, 
  denote by $u$ its position and 
  %
  %
  proceed 
  to insert $u$ in $\tau$ as follows:
  \begin{itemize}
    \item 
    In case the slope $U$ is in $L$: if $wd(u)$ is \emph{minimal}, 
    insert $u$ to the left of $\tau$ (we say that $u$ \emph{jumps},
    or that $wd(u)$ is a jumping weight); 
    otherwise, insert $u$ in $\tau$ at a distance 
    $wd(u) + shift -1$ from the right-hand end of $\tau$.
    \item In case the slope $U$ is in $R$: 
    consider instead the \emph{maximality} of $wd(u)$ 
    to decide if $u$ jumps, and insert at a distance 
    $wd(u) + shift$ from the right-hand end otherwise.

  \end{itemize}

    The minimality (resp. maximality) of the value is decided by considering
    either the increment condition on the slope or the 
    valley (resp. peak) condition of Definition~\ref{def.wd}: if the
    corresponding bound is achieved, then the tested value is indeed extremal.
\end{enumerate}
\end{enumerate}
\end{definition}

\begin{figure}
\begin{center}
\begin{tikzpicture}
\draw[thick] (-1,0) -- (0,0); 
\draw[dashed] (0,0) -- (1,0); 
\draw[thick] (1,0) -- (4,0);
\draw[dashed] (4,0) -- (5,0);
\draw[thick] (5, 0) -- (8.6, 0);
\draw[dashed] (8.6,0) -- (9.4,0);
\draw[thick] (9.4, 0) -- (11, 0); 
\draw[thick, red, ->, >=stealth] (2,.3) to[bend right] (-.9,0.3);
\draw[thick, violet, <->, >=stealth] (7.5,.6) -- (11.2,.6);
\draw (9,.8) node{\color{violet}$shift$};
\draw[thick, violet!80] (7.5,.6) -- (7.5,-.6);
\draw[thick, violet!80] (11.2,.6) -- (11.2,-.6);
\fill [pattern=north east lines, pattern color=violet!30] (7.5,.6) rectangle (11.2,-.6);
\draw[->, >=stealth, blue] (11, -.4) -- ++(0,.3);
\draw (11.02, -.8) node {\color{blue}$0$};
\draw[->, >=stealth, blue] (10, -.4) -- ++(0,.3);
\draw (10.02, -.8) node {\color{blue}$1$};
\draw[->, >=stealth, blue] (9, -.4) -- ++(0,.3);
\draw[->, >=stealth, blue] (8, -.4) -- ++(0,.3);
\draw[->, >=stealth, blue] (7, -.4) -- ++(0,.3);
\draw (7, -.8) node {\small \color{blue}$shift$};
\draw[->, >=stealth, blue] (6, -.4) -- ++(0,.3);
\draw[->, >=stealth, blue] (5, -.4) -- ++(0,.3);
\draw[->, >=stealth, blue] (4, -.4) -- ++(0,.3);
\draw[->, >=stealth, blue] (3, -.4) -- ++(0,.3);
\draw[->, >=stealth, red] (-1, -.4) -- ++(0,.3);
\draw (4.5, -.8) node {\color{blue}$\ldots$};
\draw[->, >=stealth, red] (2, -.4) -- ++(0,.3);
\draw (2, -.8) node {\small \color{red}$max$};
\draw (-.5, -.8) node {\small \color{red}jump landing};
\draw (-4.4, -.2) node {\small Insertion positions};
\draw (-4.4, -.8) node {\small from the right:};
\draw (-4, 1) node {Bottom word};

\draw (10.5, 0) node[scale=.6,circle, fill=black]{};
\draw (9.5, 0) node[scale=.6,circle, fill=black]{};
\draw (8.5, 0) node[scale=.6,circle, fill=black]{};
\draw (7.5, 0) node[scale=.6,circle, fill=black]{};
\draw (6.5, 0) node[scale=.6,circle, fill=black]{};
\draw (5.5, 0) node[scale=.6,circle, fill=black]{};
\draw (3.5, 0) node[scale=.6,circle, fill=black]{};
\draw (2.5, 0) node[scale=.6,circle, fill=black]{};
\draw (1.5, 0) node[scale=.6,circle, fill=black]{};
\draw (-0.5, 0) node[scale=.6,circle, fill=black]{};
\end{tikzpicture}
\end{center}
\caption{Insertion positions in the building bottom word (dots are word's elements).}
\label{figure.insertion_positions}
\end{figure}
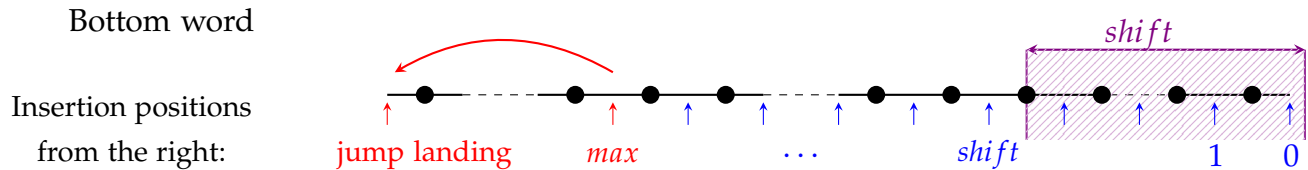

\begin{remark}
Since a bottom (resp. top) element in an up-down permutation is 
automatically followed (resp. preceded) by a larger top (resp. smaller bottom)
element, a top element and a bottom element, in that order, can never form a 
$12$ pattern in $A_{2n}(1234)$. This is the reason for the $shift$ value: 
as $shift$ elements,
among the top elements of the permutation, need to be to the right of the element $u$
that is being inserted as a bottom element, the insertion position of $u$ from the
right needs to be at least $shift$ if we aim at obtaining (as we actually
do) a $1234$-avoiding permutation.
\end{remark}

\begin{remark}
\label{remark.locations_comp_for_jumps}
Keeping all the way through the path the comparison to the 
minimum (resp. maximum) as the unique criterion for jumps may 
seem tempting, but it would make the map $\beta$ 
non-injective, as a single valley
(resp. peak) condition would decide if both of its adjacent steps wield
jumping weights --- this being generally the case with several different pairs of
weights. On the other hand, a choice like comparing to the minimum all weights
of the steps $(1,1)$ and to the maximum all those of the steps down would make this
map incompatible with the Schützenberger involution (refer to
Figure~\ref{figure.locations_comp} for visual support).
\end{remark}

\begin{remark}
Why jump at all? Without diving into the detail, it allows to avoid
completing a $1234$ pattern (recall that is an increasing subsequence of size $4$).
The $ins$ construction has two assets. First, as we explain later, 
elements that do not jump are those forming a $12$ pattern with
their fellow bottom elements; actually, at the time of insertion, all elements to the 
left are the $1$ (the smallest element in the increasing subsequence of length 2) 
of a $12$ pattern, the element not jumping being the $2$.
Second (as a consequence), the insertion position of a no-jump element is the
number of elements strictly following the (current) first ascent in the word; 
transposed to $Top(\sigma)$, this means the weights of steps down enable to 
locate the rightmost ascent. For bottom elements, jumping allows
to escape the threat of this ascent as a potential $34$ in the $1234$.
Since the leftmost insertion position that could be computed, if it was not
for jumps, is the problematic one, one could have thought of just making 
weights achieving this bound jumping weights... but then, not all elements
would be able to jump ---
which brings back to Remark~\ref{remark.locations_comp_for_jumps}.
\end{remark}

\begin{remark}
The minus $1$ in the computation of the insertion position as of the
left-hand half $L$ compensates the fact that elements
jump if their weight is minimal, so the non-jumping weights have values
between $1$ and $\ell$ (the current lower height),
whereas the valid insertion positions are $0$ to $\ell-1$.
\end{remark}

\begin{figure}
\label{figure.min_max}
  \centering
  \begin{tikzpicture}[scale=0.5]
    \tikzstyle{p}=[->, thick, >=stealth]
    
    \draw[->] (0,0) -- (18, 0);
    \draw[->] (0,0) -- (0, 5);
    
    \draw[p, ultra thick] (0,0) -- (1,1);  
    \draw[p, gray] (1,1) -- (2,2);  
    \draw[p, gray] (2,2) -- (3,3);  
    \draw (2.5,4.5) node{\color{blue}$L$} ;  
    \draw[p, ultra thick] (3,3) -- (4,2);  
    \draw[p, gray] (4,2) -- (5,1);  
    \draw[p, ultra thick] (5,1) -- (6,2);  
    \draw[p, gray] (6,2) -- (7,3);  
    \draw[p, gray] (7,3) -- (8,4);  
    \draw[gray!60, dashed] (8,0) -- (8,5);  
    \draw[p, gray] (8,4) -- (9,3);  
    \draw[p, ultra thick] (9,3) -- (10,2); 
    \draw[p, gray] (10,2) -- (11,3); 
    \draw[p, ultra thick] (11,3) -- (12,4); 
    \draw[p, gray] (12,4) -- (13,3); 
    \draw (15,4.5) node{\color{orange}$R$} ;  
    \draw[p, gray] (13,3) -- (14,2); 
    \draw[p, gray] (14,2) -- (15,1); 
    \draw[p, ultra thick] (15,1) -- (16,0); 
  \end{tikzpicture}
  \caption{Locations where the jump is decided using the peak or valley condition.}
  \label{figure.locations_comp}
\end{figure}
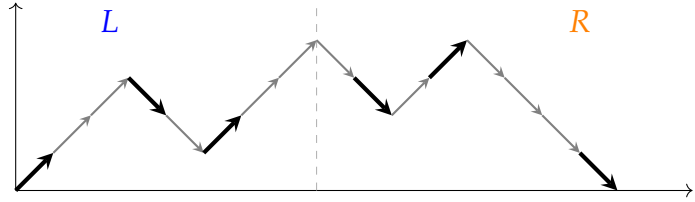

In the sequel, whilst we will be applying the algorithm to weighted Dyck 
paths $wd$ in order to realize $\sigma = \beta'(wd)$, we will be refering to 
the transitional word that will become $Bot(\sigma)$ as the \emph{bottom word}, 
and to the word that will become $Top(\sigma)$ as the \emph{top word}.

\begin{example}
Take again the example in Figure~\ref{figure.example_wd},
of an element of $WD_{14}$.
Here, the set $L$ consists of the upward slopes to the left of the valley in 
position $5$ (in dotted 
blue on Figure~\ref{figure.slopes_up}), and $R$ consists of the upward slopes to 
the right (in dashed orange).
The big dots on the figure mark the places where the weight must be compared
to its minimal/maximal possible value in order to decide if the element
jumps.

\begin{figure}
  \centering
  \begin{tikzpicture}[scale=0.6]
    \tikzstyle{p}=[->, thick, >=stealth]
    
    \draw[->] (0,0) -- (16, 0);
    \draw[->] (0,0) -- (0, 5);
    
    \draw[p, dotted, thick, blue] (0,0) -- (1,1);  
    \draw[p, dotted, thick, blue] (1,1) -- (2,2);  
    \draw (.1,.1) node[scale=.6,circle, fill=blue]{} ;  
    \draw[p] (2,2) -- (3,1);  
    \draw[p, dotted, thick, blue] (3,1) -- (4,2);  
    \draw (3.1,1.1) node[scale=.6,circle, fill=blue]{} ;  
    \draw[p] (4,2) -- (5,1);  
    \draw[gray!80, dashed] (5,4.9) -- (5,0);
    \draw[p, dashed, orange] (5,1) -- (6,2);  
    \draw[p, dashed, orange] (6,2) -- (7,3);  
    \draw[p, dashed, orange] (7,3) -- (8,4);  
    \draw (7.9,3.9) node[scale=.6,circle, fill=orange]{} ;  
    \draw[p] (8,4) -- (9,3);  
    \draw[p] (9,3) -- (10,2); 
    \draw[p, dashed, orange] (10,2) -- (11,3); 
    \draw (10.9,2.9) node[scale=.6,circle, fill=orange]{} ;  
    \draw[p] (11,3) -- (12,2); 
    \draw[p] (12,2) -- (13,1); 
    \draw[p] (13,1) -- (14,0); 

    \draw (0.4, 0.5) node[above] {$0$};
    \draw (1.4, 1.5) node[above] {$0$};
    \draw (2.6, 1.5) node[above] {$1$};
    \draw (3.4, 1.5) node[above] {$1$};    
    \draw (4.6, 1.5) node[above] {$1$};
    \draw (5.4, 1.5) node[above] {$1$};
    \draw (6.4, 2.5) node[above] {$1$};
    \draw (7.4, 3.5) node[above] {$2$};
    \draw (8.6, 3.5) node[above] {$2$};
    \draw (9.6, 2.5) node[above] {$2$};
    \draw (10.4, 2.5) node[above] {$0$};
    \draw (11.6, 2.5) node[above] {$2$};
    \draw (12.6, 1.5) node[above] {$1$};
    \draw (13.6, 0.5) node[above] {$0$};

    \draw (3, 4) node{\color{blue}$L$};
    \draw (12.5, 4) node{\color{orange!80!red}$R$};
    \draw (0.5, -0.8) node{\color{blue}$1$};
    \draw (1.5, -0.8) node{\color{blue}$2$};
    \draw (2.5, -0.8) node{\color{gray!50}$3$};
    \draw (3.5, -0.8) node{\color{blue}$4$};
    \draw (4.5, -0.8) node{\color{gray!50}$5$};
    \draw (5.5, -0.8) node{\color{orange!80!red}$6$};
    \draw (6.5, -0.8) node{\color{orange!80!red}$7$};
    \draw (7.5, -0.8) node{\color{orange!80!red}$8$};
    \draw (8.5, -0.8) node{\color{gray!50}$9$};
    \draw (9.5, -0.8) node{\color{gray!50}$10$};
    \draw (10.5, -0.8) node{\color{orange!80!red}$11$};
    \draw (11.5, -0.8) node{\color{gray!50}$12$};
    \draw (12.5, -0.8) node{\color{gray!50}$13$};
    \draw (13.5, -0.8) node{\color{gray!50}$14$};
    \draw (15.8, -0.8) node{\color{black}$u$};
  \end{tikzpicture}
  \caption{Left-hand and right-hand halves of the upward slopes ($L$ and $R$, respectively).}~\label{figure.slopes_up}
\end{figure}
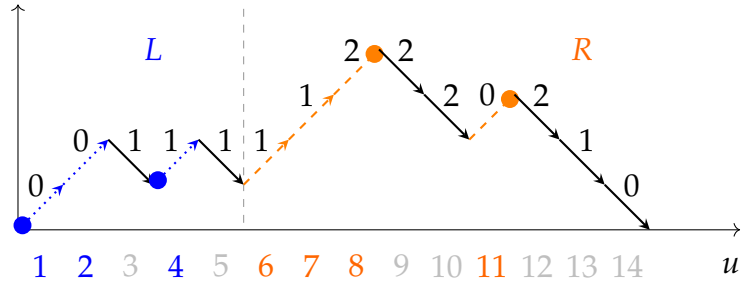

\begin{compactenum}
\item First to be handled is the leftmost upward slope. Both elements 
jump to the left: $2~1$\,.
\vspace{.3cm}

\item We overrun the first slope down, which is one step long,
so we have $shift = 1$ as we handle the $4$. Since this upward slope
still belongs to the first half $L$, we compare the weight $1$ to its
minimum value. In the absence of a previous weight on the slope
to compare it to, it
is determined by the valley condition: here, $1+1$ is strictly greater
than the valley height $1$, so the weight is not minimal and $4$ does
not jump. Its insertion position from the right is $1+shift-1 = 1$, so we obtain
$2~{\color{red!80!black}4}~1$\,.
\vspace{.3cm}

\item We overrun the second slope down and add its length to $shift$, which is
now $2$. We leave the left-hand half $L$ and enter the right-hand half $R$,
so we need to check if the next weight is maximal to decide if the elements jump.
The value $1$ is indeed maximal, both because of its height and because the next
element is $1$ too, so $6$ jumps.
However, $7$ does not, since $1$ is less than both its lower height and
the following weight $3$.
In $R$, we do not subtract $1$ to the insertion position
anymore; we get $1+shift = 3$ for $7$, and: ${\color{red!80!black}6~7}~2~4~1$\,.
Finally, $8$ does jump because $3$ is maximal with respect to the peak condition:
$2+2 = 4 = h_{peak}$. We have now: ${\color{red!80!black}8}~6~7~2~4~1$\,.
\vspace{.3cm}

\item We add the length of the next slope down to $shift$, which brings it to $4$,
and test if $0$ is maximal. It is obviously smaller than its height, and since 
it is the last element of the slope we need to test the peak condition: 
$0+2$ is less than $3$, so $11$ does not jump and is inserted at the position
$0+shift = 4$ from the right, which hands:
\vspace{-.3cm}

\[
Bot(\sigma) = 8~6~{\color{red!80!black}11}~7~2~4~1\,.
\]

\vspace{-.1cm}
\item The subsequence $Top(\sigma)$ is obtained by applying the Schützenberger
involution to the path, basically reversing left and right, then executing the
algorithm, and finally applying the (shifted) Schützenberger involution
so as to be back with the right element values. Note that it boils down to 
using the same algorithm on the slopes down as on the upward slopes, except they are 
scanned from right
to left (with the rules related to $L$ and $R$ swapped), the insertion position
computed is from the left, and elements that jump go all the way to the right.
In the end we get $Top(\sigma) = 13~12~14~10~9~5~3$ and $\sigma = 8~13~6~12~11~14~7~10~2~9~4~5~1~3$\,.
\end{compactenum}
\end{example}

\begin{definition}
Let $wd$ be a weighted Dyck path in $WD_{2n}$. We define $\beta(wd)$ as
the map obtained by applying $\beta'$ to the irreducible factors of $wd$
and then taking the shifted concatenation product of the image permutations.
\end{definition}

\begin{remark}
By construction, the map $\beta$ is compatible with the concatenation
product as well as the Schützenberger involution of both $WD_{2n}$ and $A_{2n}(1234)$.
\end{remark}

\begin{theorem}
The map $\beta$ is a bijection between $DW_{2n}$ and $A_{2n}(1234)$.
\end{theorem}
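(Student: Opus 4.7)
The plan is to establish bijectivity through three stages: reduction to the irreducible case, well-definedness of $\beta$, and injectivity combined with a cardinality argument. Since $\beta$ is built by applying $\beta'$ to the irreducible factors of $wd$ and then taking the shifted concatenation product on the permutation side, and since 1234-avoiding up-down permutations decompose uniquely as a shifted concatenation of "irreducible" pieces (those whose corresponding bottom-top interleaving cannot be split into two shorter such permutations, in evident correspondence with Dyck path returns to 0), it suffices to show that $\beta'$ is a bijection between irreducible weighted Dyck paths of length $2n$ and irreducible elements of $A_{2n}(1234)$.

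For well-definedness, I would first check that $\sigma = \beta'(wd)$ is a genuine permutation of $\{1,\dots,2n\}$: the bottom word $Bot(\sigma)$ consists of the positions of steps $(1,1)$ and the top word $Top(\sigma)$ of the positions of steps $(1,-1)$, which are complementary subsets. The up-down property reduces to proving, by induction on the insertion step, that each bottom element inserted is placed strictly below the top elements it ends up sandwiched between in the interleaving; this follows from comparing the insertion positions from the right in $Bot$ with the mirrored insertion positions from the left in $Top$, both being governed by the same weights of the opposing slopes. For 1234-avoidance, the crucial invariant to carry through the insertion is the one foreshadowed in the remarks: at the moment of insertion of position $u$ as a bottom element with weight $wd(u)$, the $shift$ top elements already to the right of the intended position are precisely those that could play the role of a potential "4" completing a 1234, so the insertion position $wd(u)+shift-1$ (or $+shift$ in $R$) is exactly the rightmost safe slot, and the jumping mechanism (triggered by extremal weights) handles the boundary case where even this slot would complete a pattern.

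For injectivity I would construct an explicit inverse $\beta^{-1}$. Given $\sigma \in A_{2n}(1234)$, one reads $Bot(\sigma)$ in the reverse order of the insertion: the position of the rightmost insertion in the partial word, combined with the current $shift$ value (recoverable from the number of top elements already processed to the right), determines the weight of the step at position $u$ via $wd(u) = p-shift+1$ in $L$ or $p-shift$ in $R$; jumping elements correspond to minimal (resp.\ maximal) weights, which are themselves forced by the monotonicity and the valley (resp.\ peak) condition — so the weight can be reconstructed unambiguously. The partition of the slopes between $L$ and $R$ is recovered from the global shape of the Dyck path, itself read off from the positions in $Bot(\sigma)$ and $Top(\sigma)$. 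Performing the symmetric reconstruction on $Top(\sigma)$ via $S_{2n}$ determines the weights on the downward slopes; the compatibility of $\beta'$ with the Schützenberger involution guarantees that the two reconstructions agree.

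The main obstacle will be twofold. First, proving that the weights produced by the inverse always satisfy all five conditions of Definition~\ref{def.wd}: the bound by the lower height, the monotonicity along slopes, and the peak/valley inequalities. These must be derived from precise bookkeeping on how the insertion positions evolve as successive steps of a slope are processed, and in particular from the observation that "non-extremal" weights occupy exactly the valid insertion slots $1,\dots,\ell$ (or $0,\dots,\ell-1$ in $L$), which forces the monotonicity along a slope. Second, proving the 1234-avoidance invariant during insertion requires carefully tracking, for each newly inserted bottom element, the structure of ascents already present in the partial bottom and top words; the remark about "the rightmost ascent" and the location of the potential $34$ of a 1234 pattern must be turned into a formal induction. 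Once both well-definedness and the existence of a two-sided inverse are established on irreducibles, bijectivity for all of $WD_{2n}$ follows by concatenation, and is corroborated by the equality $|WD_{2n}|=|A_{2n}(1234)|$ (both equal to the three-dimensional Catalan number of index $n$).
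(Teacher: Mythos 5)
Your overall architecture (reduce to irreducible factors, show the image lands in $A_{2n}(1234)$, establish injectivity, and lean on the equicardinality of the two classes) matches the paper's, but the two substantive steps are left as acknowledged ``obstacles'' rather than proved, and in both cases the idea you would need is precisely the one you have not supplied. For membership in $A_{2n}(1234)$, your proposed invariant rests on the claim that $wd(u)+shift-1$ (resp.\ $+shift$) is ``exactly the rightmost safe slot''; that is not how the algorithm works --- the weight $wd(u)$ ranges over an interval of values, each giving a \emph{different} insertion position (this is the content of Lemma~\ref{lemma.different_positions}), and every one of these positions must be shown safe. The paper does not run a monolithic induction on the partial permutation: it invokes Borie's four-criteria characterization (Proposition~\ref{prop_cond}), handles the $123$-avoidance of $Bot(\sigma)$ and $Top(\sigma)$ by merging all upward slopes into a single slope whose corrected weights form a non-decreasing parking function (reducing to a known bijection with $123$-avoiding permutations), and handles the remaining two criteria by explicit position arithmetic using the valley and peak inequalities. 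Without some such decomposition, ``carefully tracking the structure of ascents'' is exactly the hard part, and your sketch gives no handle on it.

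For injectivity, your explicit inverse has a concrete difficulty you do not address: for a slope in $R$, whether $u$ jumps is decided by comparing $wd(u)$ to the \emph{next} weight on the slope (or to the peak condition at the top of the slope), so the weight of a jumping element in $R$ is determined by data lying to its \emph{right}, which your left-to-right reconstruction has not yet recovered; moreover you would then have to verify that the reconstructed weights satisfy all five conditions of Definition~\ref{def.wd}, which is a second nontrivial verification you flag but do not perform. The paper avoids building an inverse altogether: it takes two paths with equal images, locates the first differing weight, observes (via Lemma~\ref{lemma.different_positions}) that both occurrences must jump, and shows that this forces a rigid cascade of weights all the way to the final step, whose weight must be $0$ --- a contradiction. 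Surjectivity then comes for free from injectivity plus the known fact that $|WD_{2n}|=|A_{2n}(1234)|$. If you want to keep the explicit-inverse route, you must at minimum reorder the reconstruction on $R$ (e.g.\ right-to-left) and prove the five conditions of Definition~\ref{def.wd} for the output; as written, the proposal is a plan whose two load-bearing steps are missing.
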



\section{Proof of the bijection}
\subsection{Proof of injectivity}

We have the following lemma.

\begin{lemma}
\label{lemma.different_positions}
Let $wd$ be an irreducible element of $WD_{2n}$. Suppose we are applying
$\beta'$ to $wd$ using the algorithm of Definition~\ref{definition.bij}, and call
$u$ the next element to be
inserted in the bottom word. Assume further that
$u$ does not jump. Then the possible values of
the weight $wd(u)$ all hand a different, valid insertion position.
\end{lemma}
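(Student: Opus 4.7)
The plan is to reduce the claim to a single arithmetic identity, since injectivity of the insertion position as a function of $wd(u)$ is immediate from the formulas $wd(u) + shift - 1$ in $L$ and $wd(u) + shift$ in $R$. Thus the real work is verifying \emph{validity}: the position must lie in $\{0, 1, \ldots, k-1\}$, where $k$ denotes the length of the bottom word just before $u$ is inserted.

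I would first pin down $k$. Writing $u$ as the $j$-th step of an upward slope that starts at lower height $\ell$, and letting $\ell_u := \ell + (j-1)$ be the lower height at step $u$, the word length $k$ equals the number of up-steps of $wd$ that strictly precede $u$, namely $k_{\mathrm{before}} + (j-1)$, where $k_{\mathrm{before}}$ counts the up-steps scanned strictly before the current slope. Combining with the height identity $k_{\mathrm{before}} - shift = \ell$ yields $k = \ell_u + shift$. From this identity together with condition~1 of Definition~\ref{def.wd}, the bounds fall out: in $L$, $wd(u) \leq \ell_u$ gives position $\leq k-1$, and non-jumping forces $wd(u) \geq 1$, hence position $\geq shift \geq 0$; in $R$, non-jumping forces $wd(u) \leq w_{\max} - 1 \leq \ell_u - 1$, so position $\leq k-1$, while $wd(u) \geq 0$ gives position $\geq 0$.

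The main obstacle, and the only step needing real care, is unpacking the definitions of ``$wd(u)$ is minimal'' (in $L$) and ``$wd(u)$ is maximal'' (in $R$) into the strict inequalities $wd(u) > w_{\min}$ and $wd(u) < w_{\max}$ used above. The algorithm decides extremality by one of two criteria --- slope monotonicity (if $u$ is not the first step of its slope) or the valley/peak condition (if it is) --- and I would check each case separately to confirm the strict inequality. The degenerate case $u = 1$ in an irreducible path is vacuous: there $\ell = 0$ forces $w_{\min} = w_{\max} = 0$, so $u$ necessarily jumps and the hypothesis of the lemma fails.
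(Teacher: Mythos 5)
Your proof is correct and follows essentially the same route as the paper's: identify the current length of the bottom word as the number of up-steps already inserted, observe that it equals the lower height of $u$ plus $shift$, and deduce validity of the position from condition~1 of Definition~\ref{def.wd} together with the strict inequality supplied by the non-jumping hypothesis (the paper treats only the $R$ case explicitly and is terser about that last point, which you rightly identify as the step needing care). One small correction to your case description: in $R$ the peak condition governs the jump test at the \emph{last} step of a slope (mirroring, under the Schützenberger involution, the valley test at the first step of an $L$-slope), not the first — though this does not affect your bounds, since in every case non-jumping still yields $wd(u)$ strictly below its maximal admissible value.
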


\iflongversion
\begin{proof}
We prove it for the bottom word, and under this hypothesis that
$u$ belongs to a upward slope that is in $R$, as the proof is
essentially the same for the other cases.
The current length of the word is the number $\ell$ of steps $(1,1)$
already handled, that is, the number of elements already inserted.
As $u$ does not jump, it will be inserted
at position $wd(u) + shift$, where $shift$ is the number of steps
down that have been overrun at this stage of the algorithm. 
The range that should be granted to
$wd(u)$ is thus between $0$ and the length of the allowed insertion zone,
that is $\ell - shift$ (or actually $\ell -shift -1$ since we exclude jump
cases); this is exactly the lower height of the step 
corresponding to $u$, so this condition is ensured by the definition of 
$WD_{2n}$ (and the fact that $u$ does not jump).
\else
\begin{proof}[Sketch of proof]
By considering the range of possible insertion positions in this case.
\fi
\end{proof}

\begin{proposition}
The map $\beta$ is injective.
\end{proposition}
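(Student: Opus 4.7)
The plan is to construct an explicit inverse of $\beta'$ on irreducible weighted Dyck paths and then lift the result to all of $WD_{2n}$ via uniqueness of factorization. Both $WD_{2n}$ and $\bigcup_n A_{2n}(1234)$ admit a unique decomposition into irreducibles: weighted Dyck paths factor at their returns to zero, while permutations factor under $\bullet$ at each position $k$ for which the values $\{1,\dots,k\}$ exactly occupy the last $k$ entries. Since $\beta$ is defined to be multiplicative across these factorizations, it suffices to establish that $\beta'$ is injective on irreducible weighted Dyck paths and that the image of an irreducible path is $\bullet$-irreducible. The latter reduces to observing that a nontrivial $\bullet$-factorization of $\sigma = \beta'(wd)$ at position $k$ forces the values $\{1,\dots,k\}$ to lie in the last $k$ entries of $\sigma$, and hence forces the first $k$ steps of $wd$ to contain exactly $k/2$ up-step positions and $k/2$ down-step positions, which means that $wd$ itself returns to zero at position $k$.

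To invert $\beta'$ on an irreducible $wd$, I would first recover the underlying unweighted shape of $wd$ from $\sigma$. By construction, the values of $Bot(\sigma)$ are the positions of the up-steps of $wd$ and those of $Top(\sigma)$ are the positions of the down-steps, so the shape, the heights, the $shift$ parameter at each up-step, and the partition of upward slopes into $L$ and $R$ are all read off immediately. To recover the weights, I would then simulate $ins$ in reverse on $Bot(\sigma)$: scanning the up-step positions in decreasing order and successively deleting them records, for each $u$, the distance $d_u$ from the right at which $u$ was inserted. By Lemma~\ref{lemma.different_positions}, the distinct non-jumping weights produce pairwise distinct distances, all strictly smaller than the jump distance, which is the leftmost position. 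Hence $d_u$ unambiguously either flags a jump or encodes $wd(u)$ via the affine formula of Definition~\ref{definition.bij}. The weights of the down-steps are recovered symmetrically from $Top(\sigma)$, first applying $S_{2n}$ to reduce to another instance of the same inversion performed on $S(wd)$, then applying $S$ to return to $wd$.

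The main obstacle will be handling the case where $u$ has jumped: there, $wd(u)$ equals the extremal value permitted by either an increment condition (involving the weight of a neighboring step on the same slope) or a peak/valley boundary condition (involving a weight on an adjacent slope), and the dependency is oriented oppositely for $L$-slopes (minimality, typically depending on a weight to the left of $u$) and for $R$-slopes (maximality, typically depending on a weight to the right). Making the reconstruction consistent requires interleaving the recovery of up-step and down-step weights and processing the two halves in the appropriate directions, then verifying inductively that the recovered weighting satisfies all five constraints of Definition~\ref{def.wd}. This bookkeeping runs along the same range analysis as in the proof of Lemma~\ref{lemma.different_positions}; once it is carried out, the map so defined is a two-sided inverse of $\beta'$, and injectivity of $\beta$ on all of $WD_{2n}$ follows at once from the uniqueness of the two factorizations.
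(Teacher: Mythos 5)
Your reduction to irreducible factors is sound, and so is the recovery of the underlying Dyck path and of the weights of the \emph{non-jumping} steps: reverse simulation of $ins$ does record each insertion distance, and the range analysis behind Lemma~\ref{lemma.different_positions} shows that non-jump distances are pairwise distinct and strictly smaller than the current word length, so the leftmost position unambiguously flags a jump. The genuine gap is the step you defer as ``bookkeeping'': recovering the weight of a step that \emph{did} jump. This is not covered by the range analysis of Lemma~\ref{lemma.different_positions}, which concerns non-jumping insertions only; it is the question of whether the system of extremality conditions ``$wd(u)$ equals the bound imposed by its deciding neighbour'' has a unique solution. Each jumping weight is expressed in terms of another, possibly itself unrecovered, weight (the adjacent weight on the same slope, or the weight of the step across a peak or valley, which lives on the other word), so the reconstruction can a priori stall on a cycle --- most dangerously, a valley (resp.\ peak) both of whose adjacent steps jump and are decided by the same valley (resp.\ peak) condition, in which case one only learns $d+e=h$ and cannot separate $d$ from $e$. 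This is exactly the failure mode described in Remark~\ref{remark.locations_comp_for_jumps}, and ruling it out requires the structural fact, coming from how the $L$/$R$ split interacts with its mirror image under $S$, that at each peak and each valley at most one of the two adjacent steps has its jump decided by the peak/valley condition. You neither state nor establish this, and without it the inverse is not well defined.

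For comparison, the paper avoids constructing an inverse: it takes two paths with the same shape and image, considers the leftmost position $a$ where the weights differ, uses Lemma~\ref{lemma.different_positions} to force $a$ to jump in both paths, notes that the deciding condition must then involve a weight to the right of $a$, and propagates the discrepancy rightwards until it contradicts the forced weight $0$ of the last step. That propagation is the paper's substitute for the acyclicity argument missing from your reconstruction; some version of it (or of the ``at most one side of each peak/valley'' observation) must be supplied before your proof is complete. A minor further point: a left inverse suffices for injectivity, so you should not claim a two-sided inverse, which would in addition require showing that the reconstruction succeeds on \emph{every} permutation of $A_{2n}(1234)$ --- that is essentially the surjectivity statement, proved separately in the paper.
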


\begin{proof}[Sketch of proof]
By definition, two weighted Dyck paths with different underlying
Dyck paths have different images, since the steps $(1,1)$ correspond exactly
to the elements in even positions in the image permutation.

Let us consider $wd$ and $wd'$ two weighted Dyck paths from $WD_{2n}$
with the same underlying Dyck path and the same image,
and assume that they are different.
Consider the position of the first difference $a$ in the weight values, 
starting from the left: we have $wd(a) \neq wd'(a)$ and
$wd(e) = wd'(e)$ for all $e < a$.

Using Lemma~\ref{lemma.different_positions}, for the element $a$ 
to be inserted in the same 
position in the image permutations, regardless of the difference of
values, it needs to jump in both cases. 

Since $wd(a-1)$ is equal to $wd'(a-1)$, the slope $a$ needs
to be one whose jumps are decided by considering the next
value to the right (so we are in $R$, and we will stay so all the way until
the right-hand end).

By checking every configuration, one can show that every weight 
from there on to the right
has
only one possible value,
determined by the weight of $a$ and that differs in both paths. 
This is necessary all the way to the last element, which can only 
assume the weight 0 by definition; so this is actually impossible.
\end{proof}


\subsection{The image is a 1234-avoiding up-down permutation}

In this subsection, we rely on the 
following criteria.
\begin{proposition}[\cite{Borie17}]~\label{prop_cond}
  For $n$ a non-negative integer and $\sigma$ a permutation of size
  $2n$, $\sigma$ is an up-down permutation avoiding $1234$
  if and only if the following four conditions are satisfied:
  \begin{compactenum}
    \item the sequence $Top(\sigma)$ avoids $123$;
    \item the sequence $Bot(\sigma)$ avoids $123$;
    \item each value of $Top(\sigma)$ smaller than a bottom element $k$ appears
      to the right of $k$ in $\sigma$;
    \item if a bottom element $k$ has a smaller bottom element to its left, all peak
      values greater than $k$ to its right must be ordered in
      $\sigma$ decreasingly.
  \end{compactenum}
\end{proposition}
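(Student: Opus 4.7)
The plan is to reduce to the irreducible case and then prove well-definedness, injectivity, and surjectivity for $\beta'$ separately. Since $\beta$ is defined on irreducible factors and extended via shifted concatenation, and since both $WD_{2n}$ (split at returns to height $0$) and $A_{2n}(1234)$ (split at the minimal prefixes whose values exhaust an initial interval of $\{1,\ldots,2n\}$) admit unique factorisations into irreducibles, it suffices to show that $\beta'$ restricts to a bijection from irreducible weighted Dyck paths of length $2n$ onto irreducible elements of $A_{2n}(1234)$.

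For well-definedness, I would proceed by induction on the number of elements already inserted, checking that at every stage the partial bottom and top words can be completed into an up-down permutation avoiding $1234$. The up-down structure is built in symmetrically: each bottom element $u$ is inserted at a slot whose eventual top neighbours are both larger than $u$, thanks to the $shift$ correction, which is designed to reserve exactly as many positions to the right as there are top letters that must end up to the right of $u$ (see the first remark following Definition~\ref{definition.bij}). Avoiding $1234$ then reduces to analysing a hypothetical increasing chain of length four inside $\sigma$: any such chain uses at least two indices of the same parity, and a case analysis on whether each element of the chain jumped or not forces a contradiction with the weight bounds of Definition~\ref{def.wd}, using that non-jumping bottom elements form $12$ patterns only with strictly earlier bottoms and that jumping bottoms land all the way to the left (and symmetrically for tops via $S$).

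For injectivity, the argument already sketched in the excerpt suffices: Lemma~\ref{lemma.different_positions} rules out a first difference occurring at a non-jumping weight, and the cascading argument handles the jumping case, the height-zero endpoint producing the final contradiction. For surjectivity I would construct an explicit inverse rather than invoking cardinality. Given $\sigma \in A_{2n}(1234)$, I define the underlying Dyck path by declaring step $i$ to be an up-step exactly when the value $i$ appears in $Bot(\sigma)$; this is a legitimate Dyck path precisely because $\sigma$ is up-down. The weights are then read off by simulating $ins$ backwards: processing $Bot(\sigma)$ left-to-right, the insertion position of each new letter $u$ into the word-so-far, together with the current value of $shift$ and the known $L$ versus $R$ distinction, determines $wd(u)$ uniquely, with insertions at the leftmost slot interpreted as jumps whose weight is the extremal value forced by the incrementality or by the peak/valley condition. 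The top weights are recovered by the symmetric procedure applied to $S(\sigma)$, and one checks that $\beta' \circ \beta'^{-1}$ and $\beta'^{-1} \circ \beta'$ are the identity by comparing, step by step, the insertion positions produced by the two procedures.

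The main obstacle, where I expect most of the technical effort to concentrate, is proving that the inverse construction actually lands in $WD_{2n}$, i.e.\ that the weights produced satisfy all five conditions of Definition~\ref{def.wd} — in particular the peak and valley conditions, which couple the last weight of one slope to the first weight of the next across a jump/no-jump boundary. I would prove this by extracting from the $1234$-avoidance and up-down hypothesis on $\sigma$ enough structural information about the occurrences of $12$ patterns among consecutive bottom (and top) elements to match precisely the bounds imposed by Definition~\ref{def.wd}; as hinted in Remark~\ref{remark.locations_comp_for_jumps}, it is exactly the peak/valley condition (rather than a naive global extremum) that makes the jump criterion reversible, and so it is this compatibility that must be teased out of the permutation side.
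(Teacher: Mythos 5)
Your proposal does not address the statement at hand. Proposition~\ref{prop_cond} is a purely permutation-level characterization: an ``if and only if'' between ``$\sigma$ is an up-down permutation of size $2n$ avoiding $1234$'' and the four listed conditions on $Top(\sigma)$ and $Bot(\sigma)$. It makes no reference to weighted Dyck paths, to the map $\beta$, to the insertion algorithm $ins$, or to jumps; indeed the paper does not prove it at all but imports it from Borie, and then \emph{uses} it as the criterion against which the image of $\beta$ is checked. What you have written is instead a proof plan for the main theorem of the paper (that $\beta$ is a bijection between $WD_{2n}$ and $A_{2n}(1234)$): reduction to irreducibles, well-definedness, injectivity via Lemma~\ref{lemma.different_positions}, and an explicit inverse. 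None of that machinery can establish the equivalence asserted in Proposition~\ref{prop_cond}, which must hold for \emph{every} permutation $\sigma$ of size $2n$, not only for those arising as images of weighted Dyck paths.

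A proof of the actual statement would be a direct double implication about permutations. For necessity, each condition fails only by exhibiting a forbidden configuration: for instance, a $123$ pattern inside $Top(\sigma)$ extends to a $1234$ in $\sigma$ by prepending the smaller bottom neighbour of its first element (and symmetrically for $Bot(\sigma)$ using the larger top neighbour of its last element); condition~3 fails only if some top value smaller than a bottom element $k$ sits to the left of $k$, which combined with the up-down shape again produces an increasing subsequence of length $4$; similarly for condition~4. For sufficiency, one observes that any increasing subsequence of length $4$ in an up-down permutation must, by pigeonhole on the parity of its four positions, either contain two top elements or two bottom elements, and a short case analysis shows that each such configuration violates one of conditions~1--4. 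If you want to salvage your text, it belongs with the proof of the theorem, not here; as a proof of Proposition~\ref{prop_cond} it is a non-starter.
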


\begin{proposition}
Let $wd$ be a weighted Dyck path and $\sigma$ be its image by the
map $\beta$. Then, $\sigma$ is an up-down permutation avoiding $1234$.
\end{proposition}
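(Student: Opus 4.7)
By the compatibility of $\beta$ with the shifted concatenation product on both sides, and since $\bigcup_{n} A_{2n}(1234)$ is stable under that product, it suffices to prove the statement when $wd$ is irreducible; in that case $\sigma=\beta'(wd)$. First, $Bot(\sigma)$ and $Top(\sigma)$ together exhaust $\{1,\dots,2n\}$, as they are respectively the positions of the up-steps and of the down-steps of $wd$, so $\sigma$ is a genuine permutation of size $2n$. I would then verify each of the four conditions of Proposition~\ref{prop_cond} in turn.

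For conditions (1) and (2), I would use that $Top(\sigma)=S_{2n}(ins(S(wd)))$, that $S$ stabilises $WD_{2n}$, and that $S_{2n}$ preserves $123$-avoidance (reversing both the alphabet and the reading direction stabilises monotone patterns). Both conditions thus reduce to showing that $ins$ always outputs a $123$-avoiding word. I would prove this by induction on the insertion steps. Because values are inserted in increasing order (the positions of the up-steps of $wd$ are scanned from left to right), a newly inserted element can only play the role of the ``$3$'' in a $123$ pattern. Jumping elements are placed at the extreme left, where this is impossible. For non-jumping elements, the remark after Definition~\ref{definition.bij} indicates that the insertion position sits just past the current first ascent; the invariant to maintain is that no ascent of the partial word ends strictly to the left of the slot at which the next insertion will occur, which rules out the completion of a $123$.

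For condition (3), the quantity $shift$ used by the algorithm counts the positions of down-steps of $wd$ already passed when a bottom element $k$ is inserted. I would show that the insertion rule leaves exactly $shift$ free cells to the right of $k$, which the symmetric construction $S_{2n}\circ ins \circ S$ for $Top(\sigma)$ fills precisely with the top values whose underlying down-steps lie to the right of $k$ in $wd$ while being numerically smaller than $k$; this is exactly the content of condition (3). Condition (4) applies to bottom elements with a smaller bottom element to their left; by the insertion rule these are exactly the non-jumping bottom elements. For such a $k$, the non-increase of weights on successive descents (axiom~3 of Definition~\ref{def.wd}), transferred via the mirrored analysis of $Top(\sigma)$, forces the top values greater than $k$ lying to its right in $\sigma$ to appear in decreasing order.

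The step I expect to be the main obstacle is the invariant bookkeeping for the $123$-avoidance of $ins$. One must cleanly identify how the position of the first ascent of the partial word relates to $shift$, to the current lower height, and to the weight being processed, and verify that this relation survives both the transition from the left half $L$ to the right half $R$ (where the jump criterion switches from ``minimal'' to ``maximal'' and the offset in the insertion position shifts by one) and the case-split between jumps decided by the increment rule and those decided by the peak or valley condition of Definition~\ref{def.wd}. The precise offsets prescribed in Definition~\ref{definition.bij} are exactly what make this bookkeeping consistent throughout the run of the algorithm, but checking the invariant case by case will be the crux of the proof.
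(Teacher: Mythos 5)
Your skeleton (reduce to the irreducible case, then check the four criteria of Proposition~\ref{prop_cond}) matches the paper's, and the reduction of condition~(1) to condition~(2) via the Schützenberger involutions is also how the paper proceeds. But the substance of each of the three remaining verifications is either missing or pointed at the wrong ingredient. For conditions~(1)--(2), the paper does not run an ascent-tracking induction: it builds an explicit transformation collapsing all the upward slopes of $wd$ into a single upward slope (adding to each non-jumping weight a \emph{correction} equal to $shift$, plus $1$ in the $R$-half), proves via consecutive peak and valley conditions that the resulting weights form a non-decreasing parking function, and then invokes the known bijection from~\cite{Borie17} between non-decreasing parking functions and $123$-avoiding permutations. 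Your proposed invariant might be made to work, but you explicitly defer exactly the case analysis that would constitute the proof, so as written this is a plan rather than an argument.

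The more serious gaps are in conditions~(3) and~(4). For~(3), your description is internally inconsistent (a down-step ``to the right of $k$ in $wd$'' has position greater than $k$, hence cannot be ``numerically smaller than $k$''; the top values smaller than $k$ are precisely the down-steps to the \emph{left} of $k$), and the insertion rule does not leave ``exactly $shift$'' cells to the right of $k$ but at least that many. More importantly, the actual proof is a computation of the final distance between $u$ and $d$ in $\sigma$, namely $wd(u)+wd(d)-h_{val}$ up to a $-1$, whose non-negativity rests on the \emph{valley} condition $wd(u)+wd(d)\geqslant h_{val}$ being strict when neither element jumps; you never invoke axiom~5 of Definition~\ref{def.wd}, without which condition~(3) cannot be established. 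Similarly, for~(4) the operative fact is that non-jumping elements are exactly those with a smaller bottom (resp.\ larger top) element on the relevant side, after which the analogous distance computation is closed by the \emph{peak} condition $e+d\leqslant h$ (axiom~4), not by the monotonicity of weights along descents (axiom~3) that you cite. The quantitative position computations driven by the peak and valley inequalities are the heart of the proof, and they are absent from your proposal.
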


\begin{proof}[Sketch of proof]
We shall use the above criteria to prove the proposition.

First and foremost, let us prove that $Bot(\sigma)$ avoids $123$ (the 
first item is proven essentially the same way). 

Note that the insertion process on the first upward slope (which
hands a permutation of size the length of the slope) is
actually a bijection between non-decreasing parking functions
(here starting from 0) and $123$-avoiding permutations which
is described in~\cite{Borie17}.

To obtain the subsequence $Bot(\sigma)$, one just needs to consider
the upward slopes, as well as whether their valley/peak steps
need to jump (which is usually determined by looking at the value of the 
adjacent step $(1,-1)$). Following this viewpoint, we define a transformation
of the upward slopes, whose image is a single
upward slope; this slope will be such that applying the function $ins$ to 
it hands a permutation that is the
standardized version of $Bot(\sigma)$. To obtain the weights of the
new upward slope from the initial (upward slopes of the) weighted Dyck path, 
consider each step $(1,1)$, from left to right and determine if it jumps
in the initial Dyck path. If so, give it the same image weight as the 
previously obtained weight; if not, the image weight will be 
the pre-image weight to which one adds a \emph{correction}, which is
the number $shift$
of steps $(1,-1)$ that are on its left, plus $1$ if the pre-image weight 
belongs to the right-hand half of the upward slopes (this in order to 
compensate the minus one that is applied to left-hand slopes when
determining the position of insertion according to $ins$).

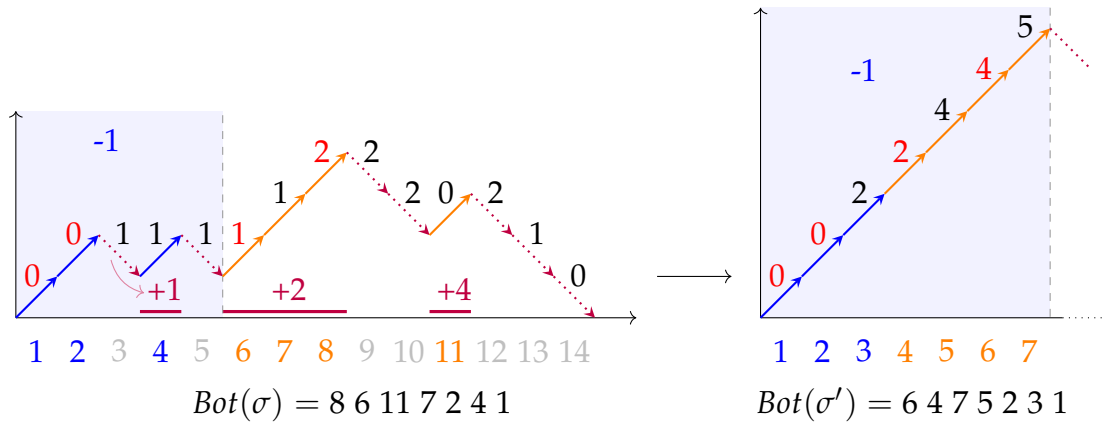
\begin{figure}[h]
  \centering
  \begin{tikzpicture}[scale=0.55]
    \tikzstyle{p}=[->, thick, >=stealth]
    
    \fill[color=blue!5] (0,0) -- (0,5) -- (5,5) -- (5,0) -- (0,0);
    \draw (2.2, 4.3) node{\color{blue} -1};
    \draw[->] (0,0) -- (15, 0);
    \draw[->] (0,0) -- (0, 5);
    \draw (3.6, 0.7) node{\color{purple} +1};
    \draw[very thick, purple] (3,0.15) -- (4, 0.15);
    \draw (6.6, 0.7) node{\color{purple} +2};
    \draw[very thick, purple] (5,0.15) -- (8, 0.15);
    \draw (10.6, 0.7) node{\color{purple} +4};
    \draw[very thick, purple] (10,0.15) -- (11, 0.15);
    \draw[->, purple!50] (2.3,1.4) to[bend right] (3.1,0.6);
    
    \draw[p, blue] (0,0) -- (1,1);  
    \draw[p, blue] (1,1) -- (2,2);  
    \draw[p, dotted, thick, purple] (2,2) -- (3,1);  
    \draw[p, blue] (3,1) -- (4,2);  
    \draw[p, dotted, thick, purple] (4,2) -- (5,1);  
    \draw[gray!80, dashed] (5,4.9) -- (5,0);
    \draw[p, orange] (5,1) -- (6,2);  
    \draw[p, orange] (6,2) -- (7,3);  
    \draw[p, orange] (7,3) -- (8,4);  
    \draw[p, dotted, thick, purple] (8,4) -- (9,3);  
    \draw[p, dotted, thick, purple] (9,3) -- (10,2); 
    \draw[p, orange] (10,2) -- (11,3); 
    \draw[p, dotted, thick, purple] (11,3) -- (12,2); 
    \draw[p, dotted, thick, purple] (12,2) -- (13,1); 
    \draw[p, dotted, thick, purple] (13,1) -- (14,0); 

    \draw (0.4, 0.5) node[above] {\color{red}$0$};
    \draw (1.4, 1.5) node[above] {\color{red}$0$};
    \draw (2.6, 1.5) node[above] {$1$};
    \draw (3.4, 1.5) node[above] {$1$};    
    \draw (4.6, 1.5) node[above] {$1$};
    \draw (5.4, 1.5) node[above] {\color{red}$1$};
    \draw (6.4, 2.5) node[above] {$1$};
    \draw (7.4, 3.5) node[above] {\color{red}$2$};
    \draw (8.6, 3.5) node[above] {$2$};
    \draw (9.6, 2.5) node[above] {$2$};
    \draw (10.4, 2.5) node[above] {$0$};
    \draw (11.6, 2.5) node[above] {$2$};
    \draw (12.6, 1.5) node[above] {$1$};
    \draw (13.6, 0.5) node[above] {$0$};

    \draw (0.5, -0.8) node{\color{blue}$1$};
    \draw (1.5, -0.8) node{\color{blue}$2$};
    \draw (2.5, -0.8) node{\color{gray!50}$3$};
    \draw (3.5, -0.8) node{\color{blue}$4$};
    \draw (4.5, -0.8) node{\color{gray!50}$5$};
    \draw (5.5, -0.8) node{\color{orange}$6$};
    \draw (6.5, -0.8) node{\color{orange}$7$};
    \draw (7.5, -0.8) node{\color{orange}$8$};
    \draw (8.5, -0.8) node{\color{gray!50}$9$};
    \draw (9.5, -0.8) node{\color{gray!50}$10$};
    \draw (10.5, -0.8) node{\color{orange}$11$};
    \draw (11.5, -0.8) node{\color{gray!50}$12$};
    \draw (12.5, -0.8) node{\color{gray!50}$13$};
    \draw (13.5, -0.8) node{\color{gray!50}$14$};

    \draw[->] (15.5,1) -- (17.3,1);
    \begin{scope}[xshift=18cm]
    \fill[color=blue!5] (0,0) -- (0,7.5) -- (7,7.5) -- (7,0) -- (0,0);
    \draw[gray!80, dashed] (7,7.5) -- (7,0);
    \draw (0,0) -- (7.3, 0);
    \draw[->, dotted] (7.3,0) -- (8.5, 0);
    \draw[->] (0,0) -- (0, 7.5);
    \draw (2.5, 6) node{\color{blue} -1};
    
    \draw[p, blue] (0,0) -- (1,1);  
    \draw[p, blue] (1,1) -- (2,2);  
    \draw[p, blue] (2,2) -- (3,3);  
    \draw[p, orange] (3,3) -- (4,4);  
    \draw[p, orange] (4,4) -- (5,5);  
    \draw[p, orange] (5,5) -- (6,6);  
    \draw[p, orange] (6,6) -- (7,7); 
    \draw[purple, dotted, thick] (7,7) -- (8,6); 
    
    \draw (0.4, 0.5) node[above] {\color{red}$0$};
    \draw (1.4, 1.5) node[above] {\color{red}$0$};
    \draw (2.4, 2.5) node[above] {$2$};
    \draw (3.4, 3.5) node[above] {\color{red}$2$};
    \draw (4.4, 4.5) node[above] {$4$};
    \draw (5.4, 5.5) node[above] {\color{red}$4$};
    \draw (6.4, 6.5) node[above] {$5$};
    
    \draw (0.5, -0.8) node{\color{blue}$1$};
    \draw (1.5, -0.8) node{\color{blue}$2$};
    \draw (2.5, -0.8) node{\color{blue}$3$};
    \draw (3.5, -0.8) node{\color{orange}$4$};
    \draw (4.5, -0.8) node{\color{orange}$5$};
    \draw (5.5, -0.8) node{\color{orange}$6$};
    \draw (6.5, -0.8) node{\color{orange}$7$};
    \end{scope}
  \end{tikzpicture}
  
  ~~~~~~~~~~~~~~ $Bot(\sigma) = 8~6~11~7~2~4~1$ \hspace{3cm} 
  $Bot(\sigma') = 6~4~7~5~2~3~1$
  \caption{Transformation to a single slope. Weights are red when there is a jump.}~\label{figure.tranformation}
\end{figure}

\iflongversion
The new weights are weakly increasing. Indeed, it is obvious when the
weight corresponds to a jump (it is equal to the previous one) or for
weights formerly belonging to the same slope (since one adds a constant
value to a slope); consider now a formerly last element of a slope 
$w$ and the formerly first element of the following slope $w'$.
The peak condition demands that $u$ be less than or equal to 
$h_{peak}-d$, where $h_{peak}$ is the height of the peak and $d$ the 
adjacent weight. On the other hand, we have $w' \geqslant h_{val} - d'$,
where $h_{val}$ is the height of the valley of $w'$ and $d'$ the
adjacent weight on this valley. But $h_{val}-d'$ is greater than
$h_{val}-d=h_{peak}-\ell-d$, which is in turn greater than $w-\ell$, where 
$\ell$ is the length of the slope down between the two considered
upward slopes, and also exactly the difference between the \emph{corrections} 
added by the transformation to $w$ and $w'$ (except for maybe an additional
$1$ that works in our favour).
\else
First, use consecutive peak and valley conditions to show that the weights 
of the transformation are non-decreasing.
\fi

  \begin{tikzpicture}[scale=0.75]
    \tikzstyle{p}=[->, thick, >=stealth]
    
    \draw[p, dotted, thick] (0,0) -- (1,1);  
    \draw[p] (1,1) -- (2,2);  
    \draw[p, gray!70] (2,2) -- (3,1);  
    \draw[p, gray!70, thick, dotted] (3,1) -- (4,0);  
    \draw[p, gray!70] (4,0) -- (5,-1);  
    \draw[p] (5,-1) -- (6,0);  
    \draw[p, dotted, thick] (6,0) -- (7,1);  
    \draw[<->, purple!70] (2,1.6) -- (4.6,-1);
    
    \draw (1.4, 1.5) node[above] {$w$};
    \draw (2.6, 1.5) node[above] {$d$};
    \draw (4.6, -0.5) node[above] {$d'$};
    \draw (5.5, -0.5) node[above] {$w'$};
    \draw (3, 0.5) node[below] {\color{purple!70}$\ell$};
    \draw (14.5, 0.5) node {$w'\geqslant h_{val}-d' \geqslant h_{val}-d = h_{peak}-\ell-d \geqslant w-\ell$};
  \end{tikzpicture}

In addition, the new weights are still bounded by the height. It is straightforward
for the left-hand half of upward slopes (in blue on the figures), since the weight
and height of the step are increased by the same value. As for the right-hand half
(in orange), to which an additionnal 1 is added,
the only values that could bring trouble are maximal values with respect to the
height and, as part of the right-hand half, that means they jump: they will
thus take the value of the previous weight in the image, so they necessarily
stay below the bound.

The new weights are therefore a non-decreasing parking function (starting from
0), on which the function $ins$ hands a $123$-avoiding permutation, which
ends the proof.\\

We now move on to the third criterion. Refering to
the notations of Figure~\ref{fig.small_peak_right}, we need to show that the distance
between $d$ and $u$ in the image permutation (in grey) is non-negative.
We compute it as the difference between the distances from the right
of $u$ and $d$ (in green and violet, respectively).
We assume that neither $d$ nor $u$ jumps, which is the worst-case scenario
for the distance. More specifically, we may assume without loss of generality
that at least one element from each one of the two slopes does not jump; 
the lowest height 
element of the slope that does not jump is then the one we should examine since
it is the closest, in the permutation, to its counterparts from the other 
slope: replace $d$ (resp. $u$) by this element.

In the algorithm of insertion defined by $\beta$, $u$ is inserted at a
distance from the right $pos(u) = wd(u) + shift(u) ~(-1)$, where $wd(u)$ is the weight
of $u$ in the Dyck path, $shift(u)$ is defined in the algorithm (see also the
legend of Figure~\ref{fig.small_peak_right}), 
and $1$ may or may not be subtracted depending
on the position of the slope in the path.
What the transformation described in the previous item of the proof shows is that
the steps $(1,1)$ that are to the right of $u$ correspond to elements that will be 
inserted to its left in the permutation (this is only true if $u$ does not
jump). Therefore, this position of insertion is also the final distance from
the right of $u$. 
For the same reason (except that the insertion position is computed as a distance
from the left), the final distance from the right of $d$ is
$n-pos(d) = n - wd(d) + shift(d) ~(+1)$.
Note also, by exhaustion of cases, that at most one of the two
insertion positions requires a minus $1$.

Now the difference of positions between $d$ and the element following $u$ in 
the permutation is:

\vspace{-1cm}
\begin{align*}
dist(u,d) &= pos(u) +pos(d)-n ~~(-1)\\
&= wd(u) + wd(d) +shift(u) + shift(d)-n ~~(-1)\\
&= wd(u)+wd(d)+shift(u)-lenBot(u) ~~(-1)\\
&= wd(u)+wd(d)-h_{val} ~~(-1)~\\
\end{align*}
\vspace{-.6cm}

\vspace{-.7cm}
where $lenBot(u)$ is the
length of the bottom word right before the insertion of the slope of $u$.
The condition on the valley is that we have $wd(u)+wd(d)\geqslant h_{val}$,
but since neither $u$ nor $d$ jumps, this inequality is strict, so that
the distance is non-negative, and $d$ is before $u$ in the permutation, as needed.

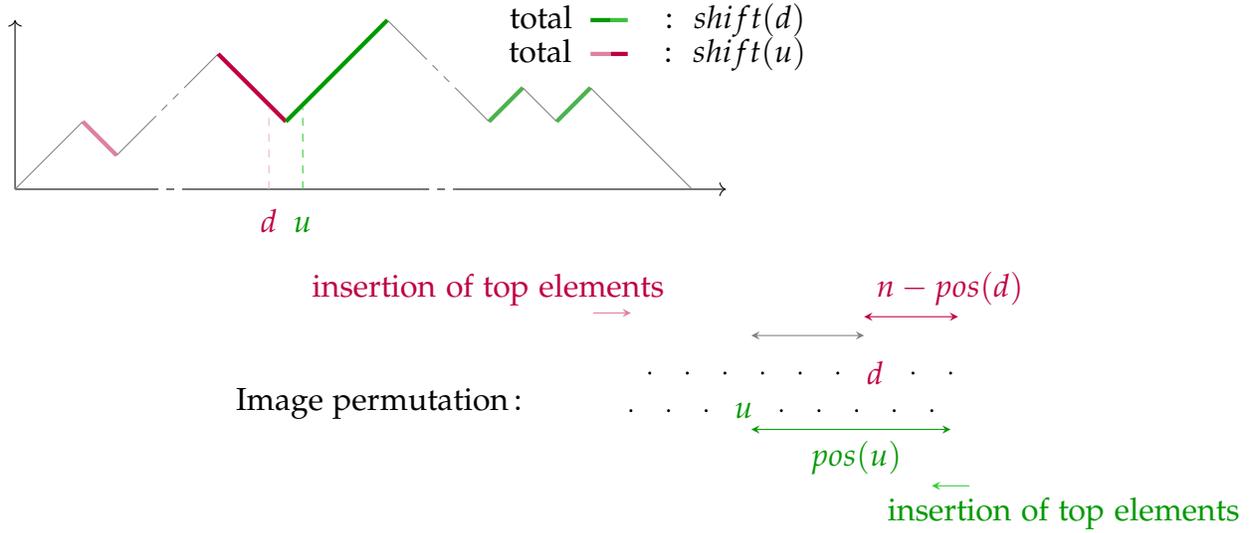
\begin{figure}
\begin{tikzpicture}[scale=0.45]
  \draw (0,0) -- (4,0);
  \draw[dashed] (4,0) -- (5,0);
  \draw (5,0) --(12,0);
  \draw[dashed] (12,0) -- (13,0);
  \draw[->] (13,0) --(21, 0);
  \draw[->] (0,0) --(0, 5);

  \draw (19, 4) node {total~~~~~~~~~ : ~$shift(u)$};
  \draw (19, 5) node {total~~~~~~~~~ : ~$shift(d)$};
  \draw[purple!50, ultra thick] (17,4) -- (17.6,4);
  \draw[purple, ultra thick] (17.6,4) -- (18.1,4);
  \draw[green!60!black, ultra thick] (17,5) -- (17.6,5);
  \draw[green!50!gray, ultra thick] (17.6,5) -- (18.1,5); 

  \draw[gray] (0,0) -- (2,2);  
  \draw[purple!50, ultra thick] (2,2) -- (3,1);  
  \draw[gray] (3,1) -- (4,2);  
  \draw[gray, dashed] (4,2) -- (5,3);  
  \draw[gray] (5,3) -- (6,4);  
  \draw[purple, ultra thick] (6,4) -- (8,2);  
  \draw[green!60!black, ultra thick] (8,2) -- (11,5);  
  \draw[gray] (11,5) -- (12,4);  
  \draw[gray, dashed] (12,4) -- (13,3);  
  \draw[gray] (13,3) -- (14,2);  
  \draw[green!40!gray, ultra thick] (14,2) -- (15,3);  
  \draw[gray] (15,3) -- (16,2);  
  \draw[green!40!gray, ultra thick] (16,2) -- (17,3);  
  \draw[gray] (17,3) -- (20,0);  

  \draw[purple!30, dashed] (7.5,0) -- (7.5,2.4);
  \draw[green!60!gray, dashed] (8.5,0) -- (8.5,2.4);
  \draw (7.5, -1.6) node[above] {\color{purple} $d$};
  \draw (8.5, -1.6) node[above] {\color{green!60!black}$u$};
  \end{tikzpicture}~~~~~~~~\vspace{.2cm}

  \hfill \begin{tikzpicture}[scale=0.5]
  \draw (-6.7, 0.2) node {Image permutation\,:};
  \draw[purple!50, ->, >=stealth] (-1,2.6) -- (0,2.6);
  \draw[green!60!gray, ->, >=stealth] (9,-2) -- (8,-2);
  \draw (-3.8, 2.5) node[above] {\color{purple} insertion of top elements};
  \draw (11.5, -2) node[below] {\color{green!60!black} insertion of top elements};
  \draw[purple, <->, >=stealth] (8.7,2.5) -- (6.2,2.5);
  \draw (8.5,2.5) node[above] {\color{purple} $n-pos(d)$};
  \draw[green!60!black, <->, >=stealth] (3.2,-0.5) -- (8.5,-.5);
  \draw (6, -0.5) node[below] {\color{green!60!black}$pos(u)$};
  \draw[gray, <->, >=stealth] (3.2,2) -- (6.2,2);
  \draw (0.5,1) node[scale=.1,circle, fill=black]{} ;  
  \draw (1.5,1) node[scale=.1,circle, fill=black]{} ;  
  \draw (2.5,1) node[scale=.1,circle, fill=black]{} ;  
  \draw (3.5,1) node[scale=.1,circle, fill=black]{} ;  
  \draw (4.5,1) node[scale=.1,circle, fill=black]{} ;  
  \draw (5.5,1) node[scale=.1,circle, fill=black]{} ;  
  \draw (6.5,1) node{\color{purple}$d$} ;  
  \draw (7.5,1) node[scale=.1,circle, fill=black]{} ;  
  \draw (8.5,1) node[scale=.1,circle, fill=black]{} ;  
  \draw (0,0) node[scale=.1,circle, fill=black]{} ;  
  \draw (1,0) node[scale=.1,circle, fill=black]{} ;  
  \draw (2,0) node[scale=.1,circle, fill=black]{} ;  
  \draw (3,0) node{\color{green!60!black}$u$} ;  
  \draw (4,0) node[scale=.1,circle, fill=black]{} ;  
  \draw (5,0) node[scale=.1,circle, fill=black]{} ;  
  \draw (6,0) node[scale=.1,circle, fill=black]{} ;  
  \draw (7,0) node[scale=.1,circle, fill=black]{} ;  
  \draw (8,0) node[scale=.1,circle, fill=black]{} ;  
\end{tikzpicture}
\caption{No smaller top element on the left of a given bottom element.}
\label{fig.small_peak_right}
\end{figure}

Finally, let us prove the last criterion is satisfied, that is to say that the
configuration of 1234 pattern on the following figure cannot occur.

\hspace{-1cm}\begin{tikzpicture}[scale=0.6]
  \draw[gray, very thick, dotted] (-6.5,0.5) -- (-6,0);  
  \draw[green!60!black, very thick] (-6,0) -- (-5.5,0.5);  
  \draw[green!60!black, thick, dashed] (-5.5,0.5) -- (-5,1);  
  \draw[green!60!black, very thick] (-5,1) -- (-4,2);  
  \draw[purple, very thick] (-4,2) -- (-3,1);  
  \draw[purple, thick, dashed] (-3,1) -- (-2.4,0.4);  
  \draw[thick, dashed] (-6,0) -- (-8,0);
  \draw[thick, dashed] (-4,2) -- (-8,2);
  \draw (-9, 0) node {$h_{val}$};
  \draw (-9, 2) node {$h_{peak}$};

  \draw[purple!30, dashed] (-3.5,0) -- (-3.5,1.4);
  \draw[green!60!gray, dashed] (-4.5,0) -- (-4.5,1.4);
  \draw (-3.5, -1) node[above] {\color{purple} $d$};
  \draw (-4.5, -1) node[above] {\color{green!60!black}$u$};

  \draw (14.5, 0.6) node {$u'\leqslant u\leqslant d\leqslant d'$};
  \draw[purple, <->, >=stealth] (9.7,2.5) -- (6.2,2.5);
  \draw (8.5,2.5) node[above] {\color{purple} $n-pos(d)$};
  \draw[green!60!black, <->, >=stealth] (5.2,-0.5) -- (9.5,-.5);
  \draw (7.5, -0.5) node[below] {\color{green!60!black}$pos(u)$};
  \draw[gray, <->, >=stealth] (5.2,2) -- (6.2,2);
  \draw (0.5,1) node[scale=.1,circle, fill=black]{} ;  
  \draw (1.5,1) node[scale=.1,circle, fill=black]{} ;  
  \draw (2.5,1) node[scale=.1,circle, fill=black]{} ;  
  \draw (3.5,1) node[scale=.1,circle, fill=black]{} ;  
  \draw (4.5,1) node[scale=.1,circle, fill=black]{} ;  
  \draw (5.5,1) node[scale=.1,circle, fill=black]{} ;  
  \draw (6.5,1) node{\color{purple}$d$} ;  
  \draw (7.5,1) node[scale=.1,circle, fill=black]{} ;  
  \draw (8.5,1) node{$d'$} ;  
  \draw (9.5,1) node[scale=.1,circle, fill=black]{} ;  
  \draw (0,0) node[scale=.1,circle, fill=black]{} ;  
  \draw (1,0) node[scale=.1,circle, fill=black]{} ;  
  \draw (2,0) node{$u'$} ;  
  \draw (3,0) node[scale=.1,circle, fill=black]{} ;  
  \draw (4,0) node[scale=.1,circle, fill=black]{} ;  
  \draw (5,0) node{\color{green!60!black}$u$} ;  
  \draw (6,0) node[scale=.1,circle, fill=black]{} ;  
  \draw (7,0) node[scale=.1,circle, fill=black]{} ;  
  \draw (8,0) node[scale=.1,circle, fill=black]{} ;  
  \draw (9,0) node[scale=.1,circle, fill=black]{} ;  
\end{tikzpicture}

\vspace{-.3cm}
Observe\iflongversion first\fi, by considering once again the transformation at
the beginning of this proof, that the elements in $Bot(\sigma)$ (resp. $Top(\sigma)$)
that have a smaller bottom (resp. larger top) element to their left 
(resp. right) in the permutation are exactly those who do not jump.
Consider thus a bottom element $u$ and a larger top element $d$, both of
which do not jump. 
\iflongversion
Since all top elements larger than $d$ are inserted to its right, we may
assume without loss of generality that $d$ is the position of a step
from the slope down following that of $u$.
We need to show that $u$ is put to the right
of $d$ in the image permutation, so we compute the same difference
as before, but this time we need it to be negative (not just
non-positive).
Note that, unlike in the previous case, at least one of
the insertion positions will receive a minus $1$ when computed from the weight.
Hence, with $lenSlope(u)$ the length of the slope of $u$ and $h_{val}$ the height
of this slope's valley in the Dyck path:
\begin{align*}
dist(u,d) &= pos(u) +pos(d)-n -1 ~~(-1)\\
    &= wd(u)+shift(u)+wd(d)+shift(d)-n -1 ~~(-1) \\
    &= wd(u)+wd(d)+shift(u)+n-(lenSlope(u)+L(u))-n -1 ~~(-1) \\
    &= wd(d)+wd(u)-h_{val}-lenSlope(u) -1 ~~(-1) \\
    &\leqslant h_{peak}-h_val-lenSlope(u) -1 ~~(-1) = -1 ~~(-1)~.
\end{align*}
\else
Use similar arguments and distance computation as before to show that
$d$ is inserted to the left of $u$.
\fi
\end{proof}

\begin{corollary}
Let $n$ be a positive integer.
The map $\beta_n$ is a bijection from $WD_{2n}$ to $A_{2n}(1234)$
and is compatible with the concatenation product and 
the Schützenberger involution. Furthermore, the set of positions of the
steps $(1,1)$ of the Dyck path is the set of bottom elements (valleys) in 
its image up-down permutation.
\end{corollary}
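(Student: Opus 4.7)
The plan is to assemble the ingredients already in place. The preceding proposition shows that $\beta_n$ sends $WD_{2n}$ into $A_{2n}(1234)$, and the injectivity proposition shows the map is one-to-one. What remains is surjectivity, together with the two structural statements and the valley correspondence.

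For surjectivity, I would argue by cardinality rather than by constructing an inverse. Both $WD_{2n}$ and $A_{2n}(1234)$ are known to be enumerated by the three-dimensional Catalan number: the weighted Dyck paths by Borie~\cite{Borie17}, and the $1234$-avoiding up-down permutations by Lewis~\cite{Lewis1,Lewis2}. An injection between finite sets of equal cardinality is automatically a bijection, so this yields surjectivity at once. The alternative of writing down an explicit inverse would amount to running the $ins$ algorithm backwards one step at a time---Lemma~\ref{lemma.different_positions} in fact makes this possible---but it would not shorten the argument.

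Compatibility with the concatenation product is built into the definition of $\beta$: by design, $\beta$ decomposes an arbitrary $wd \in WD_{2n}$ into its irreducible factors, applies $\beta'$ to each factor, and shifted-concatenates the results. This is exactly the shifted concatenation product on $A_{2n}(1234)$. Compatibility with the Schützenberger involution is visible in the formula $Top(\sigma) = S_{2n}(ins(S(wd)))$ of Definition~\ref{definition.bij}: horizontally reflecting $wd$ swaps the roles of $Bot$ and $Top$ up to the involution on the alphabet, which matches precisely the action of $S$ on the permutation side. Both facts were already noted as a remark immediately after Definition~\ref{definition.bij}, so the corollary only has to record them.

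The valley correspondence is immediate from the construction: $Bot(\sigma) = ins(wd)$ inserts precisely the positions of the $(1,1)$-steps of $wd$, so the set of bottom elements of $\sigma$ equals the set of positions of the rising steps of $wd$. The only non-bookkeeping step in the whole corollary is therefore surjectivity, and the main obstacle is purely conceptual rather than technical: it rests entirely on the enumerative coincidence of the two families with the three-dimensional Catalan numbers, together with the already-proven injectivity.
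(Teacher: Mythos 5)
Your proposal is correct and follows the same route the paper takes: injectivity plus the known enumeration of both families by the three-dimensional Catalan numbers (Borie for $WD_{2n}$, Lewis for $A_{2n}(1234)$) gives surjectivity, while compatibility with the product and the involution and the valley correspondence are read off directly from the construction of $\beta$, exactly as recorded in the remark following the definition.
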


Now that this bijection has been uncovered, it would be interesting to study
how it relates to product-coproduct prographs. Indeed, these objects are
more visual, and their geometric nature may allow to derive new properties.
To name one, it seems possible to endow them with a poset structure that
embeds Tamari lattices, and we conjecture it is a lattice itself: this is
a work in progress with Nicolas Borie.

\vspace{-.4cm}
\acknowledgements{

\vspace{-.2cm}
This research relied to the open-source software \texttt{SageMath}~\cite{sage} and the
combinatorics features developed by the \texttt{Sage-Combinat}
community~\cite{Sage-Combinat}.}
I would like to thank Nicolas Borie for his conjecture and for interesting
exchanges; as
well as Jean-Christophe Novelli for suggesting I take a look and for useful
discussions during the redaction phase.

\vspace{-.4cm}
\printbibliography


\end{document}